\documentclass[10pt]{article}
\usepackage{psfrag,amsmath}
\usepackage{amssymb}%花体字母加粗
\usepackage{mathrsfs}%花体字母
\usepackage{amsfonts}
\usepackage{amsmath}
\usepackage{mathrsfs,color}
\usepackage{epstopdf}
\usepackage{graphicx}
\usepackage{mathrsfs,amscd,amssymb,amsthm,amsmath,bm,url}
\usepackage{tikz}
\usepackage{algorithm}
\usepackage[noend]{algpseudocode}
\usepackage{xcolor}
\usepackage{microtype}
\usetikzlibrary{positioning,backgrounds}
\usetikzlibrary{fadings}
\usetikzlibrary{patterns}
\usetikzlibrary{calc}
\usetikzlibrary{shadings}
\pgfdeclarelayer{background}
\pgfdeclarelayer{foreground}
\pgfsetlayers{background,main,foreground}

\setlength{\evensidemargin}{-2.5cm} \setlength{\oddsidemargin}{-5mm}
\setlength{\textwidth}{17.3cm} \setlength{\textheight}{23cm}
\setlength{\headsep}{1.4mm}

\makeatletter

\renewcommand{\@seccntformat}[1]{{\csname the#1\endcsname}{\normalsize .}\hspace{.5em}}
\makeatother

\def \[{\begin{equation}}
\def \]{\end{equation}}

\newtheorem{thm}{Theorem}[section]

\newtheorem{claim}{Claim}

\newtheorem{fact}{Fact}
\newtheorem{lem}[thm]{Lemma}

\newtheorem{ex}[thm]{Example}

%\newenvironment{tikzpicture}
%{\setlength{\leftmargini}{1.5\parindent}
% \begin{itemize}
% \setlength{\itemsep}{-1.1mm}}
%{\end{itemize}}

\newenvironment{wst}
{\setlength{\leftmargini}{1.5\parindent}
 \begin{itemize}
 \setlength{\itemsep}{-1.1mm}}
{\end{itemize}}
\begin{document}

\setlength{\baselineskip}{0.20in}
\begin{center}{\Large \bf Sharp lower bounds and extremal graphs for the generalized
$k$-independence number\footnote{J.H.  financially supported by the National Natural Science Foundation of China (Grant No. 12001202), the Guangdong Basic and Applied Basic Research Foundation (Grant No. 2023A1515010838) and the Guangzhou Basic and Applied Basic Research Foundation (Grant No. 2024A04J3328).}}
\vspace{4mm}

{\large Jing Huang\footnote{Corresponding author. \\
\hspace*{5mm}{\it Email addresses}: jhuangmath@foxmail.com (J. Huang),\, 3059840843 (J. Tang)}, Jiaxin Tang\vspace{2mm}}

School of Mathematics and Information Science, Guangzhou University, Guangzhou, 510006, China
\end{center}

%%%%%%%%%%%%%

\begin{abstract}

A vertex set $S$ is a generalized $k$-independent set if the induced subgraph $G[S]$ contains no tree on $k$ vertices. The generalized $k$-independence number $\alpha_k(G)$ is the maximum size of such a set.
For a tree $T$ with $n$ vertices,
Bock et al. [J. Graph Theory 103 (2023) 661-673] and  Li et al. [Taiwanese J. Math. 27 (2023) 647-683] independently
showed that $\alpha_3(G)\geq \frac{2}{3}n$
and   identified the extremal trees that attain this lower bound. Subsequently,
Li and Zhou [Appl. Math. Comput. 484 (2025) 129018] established that  $\alpha_4(T) \geq \frac{3}{4}n$ and they further characterized all trees achieving this bound. This result was recently extended by Huang, who proved that $\alpha_4(G)\geq \frac{3}{4}(n-\omega(G))$ holds for every $n$-vertex graph,  where
$\omega(G)$ denotes the dimension of the cycle space of $G.$ The extremal graphs attaining this lower bound were also fully characterized. Based on these findings, Huang  proposed a conjecture concerning a lower bound for $\alpha_k(G)\ (k\geq2)$ together with  the corresponding extremal graphs, which naturally generalizes all the aforementioned results. In this paper, we confirm this conjecture here. We further quantify strict improvements over this bound when the equality conditions fail, and we provide a linear-time algorithm that constructs a generalized \(k\)-independent set of size at least \(\left\lceil\frac{k-1}{k}\left(n-\omega(G)\right)\right\rceil\).
\end{abstract}

\vspace{2mm} \noindent{\bf Keywords}: Generalized $k$-independence number; Dissociation set; Dimension of the cycle space.

\vspace{2mm} \noindent{\bf AMS Classification}:  05C69, 05C35

\setcounter{section}{0}

\section{\normalsize Introduction}\setcounter{equation}{0}
We begin with background and definitions. Our main results will also be given in this section.
\subsection{\normalsize Background and definitions}
Let $G=(V_G, E_G)$ be a finite simple  graph.
We call $|V_G|$ the \textit{order} of $G.$ The path, the star and the cycle of order $n$ are
denoted by  $P_n, S_n$ and  $C_n$, respectively.
For $v\in V_G,$ let $N_G(v)$ be the neighborhood of $v$ and  $d_G(v)=|N_G(v)|$ its \textit{degree}.
A vertex of degree one is a   \textit{pendant vertex}. A \textit{bridge} (also known as a \textit{cut-edge}) is an edge in a graph whose deletion  increases the number of connected components.
When  no confusion arises, we omit the subscript $G$. We follow the notation and terminology of  \cite{C95}.

Denote by $\omega(G)=|E_G|-|V_G|+c(G)$ the \textit{dimension of the cycle space} of $G$, where $c(G)$ is the number of connected components of $G$. A simple graph $G$ is called  \textit{acyclic} if it contains no cycles, whereas it is called an \textit{empty graph} if it has no edges.
For an induced subgraph $H$ of $G$, $G-H$ is the subgraph obtained from $G$ by deleting
all vertices of $H$ and all incident edges. For $W\subseteq V_G, G-W$ is the subgraph obtained from $G$ by deleting all
vertices in $W$ and all incident edges. For the sake of simplicity, we use $G-v, G-uv$ to denote the graph obtained from $G$ by deleting vertex $v \in V_G$, or edge $uv \in E_G$, respectively.
For two graphs $G_1$ and $G_2$, denote by $G_1 \cup G_2$ the  disjoint union of $G_1$ and $G_2$.
 For simplicity, we use $kG$ to denote the disjoint union of $k$ copies of $G$.

For an integer $k\geq 2$ and a subset $S\subseteq V_G$, we call $S$ a
\textit{generalized $k$-independent set} if the induced subgraph $G[S]$
contains no $k$-vertex tree. The \textit{generalized $k$-independence number} of
$G$, written as $\alpha_k(G)$, is the  maximum size of such a set.
The cases $k=2$ and $k=3$ recover the classical independence number and the dissociation number,
respectively (the latter introduced by
 Yannakakis \cite{Y81} and NP-complete to compute even on bipartite graphs).
Subsequent research by Cameron and Hell \cite{C06} showed that this problem can be solved in
polynomial time for specific graph families, including
chordal graphs, weakly chordal graphs, asteroidal triple-free graphs,
and interval-filament graphs, see also \cite{A07,B04,HB22,KK11,O11,T19} for complexity, approximability, and parameterization.

The problem concerning the bound of dissociation number in a given class of graphs is a
classical  problem and has been extensively studied. In 2009,  G\"{o}ring et al. \cite{GH09}
showed that
$$
\alpha_3(G)\geq\sum_{u\in V_G}\frac{1}{d_G(u)+1}+\sum_{uv\in E_G}\binom{|N_G[u]\cup N_G[v]|}{2}^{-1}.
$$
For a graph $G$ with $n$ vertices and
$m$ edges, Bre\v{s}ar et al. \cite{B11} pointed  out that $\alpha_3(G)\geq\frac{2n}{3}-\frac{m}{6}.$
Furthermore, they also proved that
\begin{equation*}
    \alpha_3(G)\geq
    \left\{
    \begin{array}{ll}
        \frac{n}{\left\lceil\frac{\Delta+1}{2}\right\rceil},& \textrm{if $G$ has maximum degree $\Delta,$}\\[8pt]
        \frac{4}{3}\sum_{u\in V_G}\frac{1}{d_G(u)+1},& \textrm{if $G$ has no isolated vertex,}\\[5pt]
        \frac{n}{2},& \textrm{if $G$ is outerplanar,}\\[5pt]
        \frac{2n}{3},& \textrm{if $G$ is a tree.}
    \end{array}
    \right.
\end{equation*}
Bre\v{s}ar et al. \cite{BJ13}  further  demonstrated that $\alpha_3(G)\geq\frac{2n}{k+2}-\frac{m}{(k+1)(k+2)},$ where
$k=\left\lceil\frac{m}{n}\right\rceil-1.$
Li and Sun \cite{LS23}  proved that
$\alpha_3(F)\geq \frac{2n}{3}$ for each
acyclic graph $F$ with order $n$. They also characterized all the corresponding
extremal acyclic graphs.
Bock et al. \cite{BP23a} generalized the result and  showed that if $G$ is a graph with
$n$ vertices, $m$ edges, $k$ components, and $c_1$ induced cycles of length 1 modulo 3, then
$\alpha_3(G)\geq n-\frac{1}{3}(m+k+c_1)$, the extremal graphs in which every two cycles are
vertex-disjoint were identified. In another paper, they \cite{BP23b}  provided several upper bounds on the dissociation number by utilizing  independence number in some specific classes of graphs, including bipartite graphs,  triangle-free graphs and subcubic graphs. The extremal graphs that reach the partial bounds were
characterized. Li and Zhou \cite{LZ25} established  $\alpha_4(T) \geq \frac{3}{4}n$ for a tree with $n$ vertices  and they further characterized all trees achieving this bound.
Recently, Huang \cite{H25} extended this result by proving that $\alpha_4(G)\geq \frac{3}{4}(n-\omega(G))$ holds for every $n$-vertex graph, and fully characterized the extremal graphs attaining this lower bound by using a novel approach. In the same paper,
 Huang also proposed a conjecture   regarding a lower bound for the generalized $k$-independence number and the corresponding extremal graphs for every integer $k\geq2$ (Theorem \ref{thm1.3}). In this paper, we confirm this conjecture. Beyond the sharp bound and the full extremal characterization, we derive several refined lower bounds capturing violations of the equality conditions (Remark 4.6) and give a linear-time constructive algorithm achieving \(\left\lceil\frac{k-1}{k}\left(n-\omega(G)\right)\right\rceil\)

\subsection{\normalsize Main results}

In this subsection, we give some basic notation and then describe our main result.
If a graph has order less than $k$, its generalized $k$-independence number is trivially equal to its order. Therefore, throughout this paper, we assume by default that all graphs under consideration have order at least $k$.

For integers \( k \geq 2 \) and \( i \geq 1 \), let \( R_i^k \) be the family of trees on \( ik \) vertices defined recursively as follows:
\( R_1^k \) is the set of all trees on \( k \) vertices; for \( i \geq 2 \), a tree \( T \) lies in \( R_i^k \) if there exists \( T' \in R_{i-1}^k \) and a tree \( T''\) on \( k \) vertices such that \( T \) is obtained from \( T' \cup T''\) by adding a single edge joining a vertex of \( T' \) to a vertex of \( T'' \). Equivalently, \( T \in R_i^k\) if and only if  \( T \) can be obtained by attaching \( i \) edge-disjoint \( k \)-vertex trees one by one along single edges.

Our first main result establishs a lower bound on the generalized $k$-independence number of a tree with fixed order, and all the corresponding extremal trees are characterized, as stated below.

\begin{thm}\label{thm1.1}
Let $T$ be a tree on $n$ vertices. Then $\alpha_k(T)\geq \frac{k-1}{k}n$ for every integer $k\geq2.$  Equality holds if and only if $ k \mid n $  and $T\in R_{n/k}^k.$
\end{thm}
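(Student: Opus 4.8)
The plan is to work with the complementary parameter $\tau_k(T):=n-\alpha_k(T)$ and to exploit the clean reformulation that $S\subseteq V_T$ is a generalized $k$-independent set if and only if every component of $T[S]$ has at most $k-1$ vertices (a connected graph on at least $k$ vertices always contains a $k$-vertex subtree). Thus $\tau_k(T)$ is the minimum number of vertices whose deletion leaves every component with fewer than $k$ vertices, and Theorem~\ref{thm1.1} is equivalent to $\tau_k(T)\le \frac{n}{k}$ together with the assertion that equality forces $k\mid n$ and $T\in R_{n/k}^k$. First I would record the integrality observation: since $\tau_k(T)=n-\alpha_k(T)$ is an integer, the equality $\alpha_k(T)=\frac{k-1}{k}n$ forces $\tau_k(T)=\frac{n}{k}$ to be an integer, i.e. $k\mid n$; so I may write $n=tk$ and aim to prove that the extremal case is exactly $\tau_k(T)=t$ with $T\in R_{t}^k$.

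For the bound itself I would use a greedy peeling that also yields the promised linear-time algorithm. Root $T$ arbitrarily and, while the current tree has at least $k$ vertices, pick a vertex $v$ of maximum depth with $|T_v|\ge k$, where $T_v$ is the subtree rooted at $v$. By maximality every child subtree of $v$ has at most $k-1$ vertices, so deleting the single vertex $v$ turns $T_v-v$ into admissible components, and I then recurse on $T-T_v$. Each peeling step spends one deletion to dispose of a chunk $T_v$ of at least $k$ vertices, so if the chunks have sizes $s_1,\dots,s_p\ (\ge k)$ and a leftover of size $\ell<k$ remains, then $n=\sum_i s_i+\ell\ge pk$ and the produced deletion set has size $p\le \frac{n}{k}$. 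This gives $\tau_k(T)\le \frac{n}{k}$, which is precisely the stated lower bound on $\alpha_k(T)$.

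For the characterization, the \emph{if} direction is short: if $T\in R_{n/k}^k$ then $V_T$ partitions into $t=n/k$ blocks $B_1,\dots,B_t$, each a connected $k$-vertex subtree. If a deletion set $D$ misses some block $B_i$ entirely, then $B_i$ survives intact as a connected set of $k$ vertices, so some component of $T-D$ has at least $k$ vertices; hence $D$ must meet every block and $|D|\ge t$. Combined with $\tau_k(T)\le t$ this gives $\tau_k(T)=t$, i.e. $\alpha_k(T)=\frac{k-1}{k}n$. For the \emph{only if} direction I would run the same peeling: since $\tau_k(T)=t$ and the peeling produces a valid deletion set of size $p\le \frac{n}{k}=t$, we get $t\le p\le t$, so $p=t$, which forces $\ell=0$ and $s_1=\dots=s_p=k$. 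Thus the peeling partitions $V_T$ into $t$ connected $k$-vertex subtrees, each attached to the rest through the single parent edge of its root. Counting edges, the number of inter-chunk edges is $(n-1)-t(k-1)=t-1$, so contracting the chunks yields a tree on $t$ nodes with a single edge between adjacent blocks; by the recursive description of $R_t^k$ (peel off a leaf block joined by one edge and induct on $t$) this is exactly the statement $T\in R_{n/k}^k$.

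The main obstacle is the \emph{only if} direction, specifically making the accounting in the peeling tight enough to conclude the \emph{exact} block structure rather than merely a decomposition into chunks of size at least $k$: one must argue that the equality $\tau_k(T)=t$ propagates to kill the leftover and to force every chunk to have size exactly $k$, and then verify that the resulting single-edge block tree coincides with the recursively defined family $R_{n/k}^k$. A secondary point worth isolating as a lemma is the equivalence between the recursive definition of $R_t^k$ and the block-decomposition description used above, which is exactly where the inductive peeling of a leaf block (and the edge count $(n-1)-t(k-1)=t-1$ ensuring no multiple edges between blocks) is needed.
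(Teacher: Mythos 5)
Your proposal is correct and follows essentially the same route as the paper: the reformulation $\alpha_k(T)=n-\tau_k(T)$, the greedy deepest-vertex peeling giving $\tau_k(T)\le\lfloor n/k\rfloor$, the tightness analysis forcing $s_1=\cdots=s_t=k$ and $r=0$ in the extremal case, and the block-hitting argument for sufficiency all match the paper's proof. Your extra edge count $(n-1)-t(k-1)=t-1$ is a slightly more explicit way of verifying the reconstruction of $T$ from its $k$-blocks, but it is not a different method.
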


If $G$ is an $n$-vertex disconnected  graph with
$G=\bigcup_{i=1}^lG_i$, where $G_i$ is a  component of $G$ with order $n_i\ (1\leq i\leq l)$, then it is obvious that
$$
\sum_{i=1}^ln_i=n,\ \ \sum_{i=1}^l\alpha_k(G_i)=\alpha_k(G),
$$
and thus the following result is a direct consequence of Theorem \ref{thm1.1}.

\begin{thm}\label{thm1.2}
Let $F$ be  an acyclic graph with $n$ vertices. Then $\alpha_k(F)\geq \frac{k-1}{k}n$ for every integer $k\geq2.$  Equality holds if and only if each connected component, say $T,$ of $F$ satisfies $ k \mid |V_T|$ and
$ T\in R_{|V_T|/k}^k.$
\end{thm}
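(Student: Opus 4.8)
The plan is to reduce Theorem~\ref{thm1.2} to its connected counterpart, Theorem~\ref{thm1.1}, by exploiting the additivity of $\alpha_k$ over the connected components of $F$. The one structural fact I need is elementary: every $k$-vertex tree is connected, so it cannot be spread across two distinct components of a disconnected graph. This is precisely what makes $\alpha_k$ behave additively on disjoint unions, and it is what the excerpt already flags as ``obvious'' just before the statement; I would simply make that observation explicit.

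First I would record the additivity. Write $F=\bigcup_{i=1}^{l}G_i$, where each $G_i$ is a component of $F$ of order $n_i$; since $F$ is acyclic, every $G_i$ is a tree. For any $S\subseteq V_F$, the induced subgraph $F[S]$ is the disjoint union of the subgraphs $G_i[S\cap V_{G_i}]$. Because a tree on $k$ vertices is connected, $F[S]$ contains a $k$-vertex tree if and only if some $G_i[S\cap V_{G_i}]$ does. Hence $S$ is a generalized $k$-independent set of $F$ exactly when each $S\cap V_{G_i}$ is a generalized $k$-independent set of $G_i$, which yields $\alpha_k(F)=\sum_{i=1}^{l}\alpha_k(G_i)$ together with $n=\sum_{i=1}^{l}n_i$.

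Next I would apply Theorem~\ref{thm1.1} to each tree $G_i$, giving $\alpha_k(G_i)\ge\frac{k-1}{k}n_i$, and then sum over $i$ using the two identities above:
\[
\alpha_k(F)=\sum_{i=1}^{l}\alpha_k(G_i)\ge\frac{k-1}{k}\sum_{i=1}^{l}n_i=\frac{k-1}{k}\,n ,
\]
which is the claimed bound. For the equality characterization I would note that every summand satisfies $\alpha_k(G_i)-\frac{k-1}{k}n_i\ge0$ by Theorem~\ref{thm1.1}, so the aggregate inequality is tight precisely when each of these nonnegative terms vanishes; equality in Theorem~\ref{thm1.1} for $G_i$ holds exactly when $k\mid n_i$ and $G_i\in R_{n_i/k}^{k}$, and applying this componentwise gives the stated condition. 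I do not anticipate a genuine obstacle: the whole argument is a clean reduction, and the only point meriting care is that the equality forces membership in $R_{n_i/k}^{k}$ in every single component rather than merely ``on average,'' which the termwise nonnegativity settles at once.
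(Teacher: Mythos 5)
Your proposal is correct and takes essentially the same route as the paper: the paper deduces Theorem~\ref{thm1.2} directly from Theorem~\ref{thm1.1} via the additivity identities $\sum_{i=1}^{l}n_i=n$ and $\sum_{i=1}^{l}\alpha_k(G_i)=\alpha_k(F)$, which is exactly your reduction. Your write-up merely spells out the (correct) justification for additivity and the termwise-vanishing argument for the equality case, both of which the paper treats as obvious.
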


Let $G$ be a graph with pairwise vertex-disjoint cycles, and let $\mathscr{C}_G$ denote the set of all cycles in $G$. By contracting  each cycle of $G$  to a single vertex  we obtain an acyclic graph $T_G$. More precisely, let $U_G$ be the set of  vertices not on any cycle and $W_{\mathscr{C}_G}=\{v_C:C\in\mathscr{C}_G \}$ the contracted cycle-vertices; then $V_{T_G}=U_G\cup W_{\mathscr{C}_G},$ and adjacency is defined as follows: two vertices in $U_G$ are adjacent in $T_G$ if and only if they are adjacent in $G$, a vertex $u\in U_G$ is adjacent to a vertex $v_C\in W_{\mathscr{C}_G}$ if and only if $u$ is adjacent (in $G$) to a vertex in the cycle $C$, and two vertices $v_{C^1}, v_{C^2}$ in $W_{\mathscr{C}_G}$ are adjacent in $T_G$ if and only if there exists an edge in $G$ joining a vertex of $C^1\in \mathscr{C}_G$ to a vertex of $C^2\in \mathscr{C}_G.$ The graph $T_G-W_{\mathscr{C}_G}$ equals
$\Gamma_G,$ obtained from $G$ by deleting all cycle vertices and incident edges. Figure \ref{fig1}
gives an example for  $G, T_G$ and $\Gamma_G.$

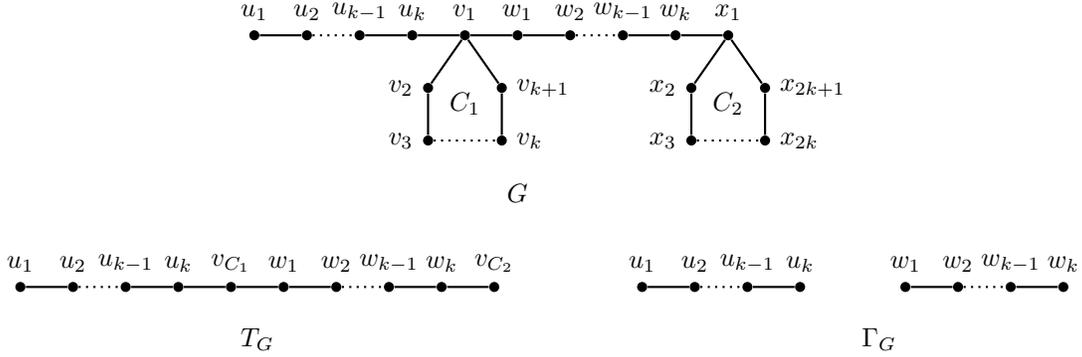
\begin{figure}[!ht]
\centering
  \begin{tikzpicture}[scale = 0.7]
  \tikzstyle{vertex}=[circle,fill=black,minimum size=0.38em,inner sep=0pt]
  \node[vertex] (u_1) at (0,0)[label=above:$u_1$]{};
  \node[vertex] (u_2) at (1,0)[label=above:$u_2$]{};
  \node[vertex] (u_k-1) at (2,0)[label=above:$u_{k-1}$]{};
  \node[vertex] (u_k) at (3,0)[label=above:$u_k$]{};
  \node[vertex] (v_1) at (4,0)[label=above:$v_1$]{};
  \node[vertex] (v_2) at (3.3,-1)[label=left:$v_2$]{};
  \node[vertex] (v_3) at (3.3,-2)[label=left:$v_3$]{};
   \node[vertex] (v_k) at (4.7,-2)[label=right:$v_k$]{};
  \node[vertex] (v_k+1) at (4.7,-1)[label=right:$v_{k+1}$]{};
  \node[vertex] (w_1) at (5,0)[label=above:$w_1$]{};
  \node[vertex] (w_2) at (6,0)[label=above:$w_2$]{};
  \node[vertex] (w_k-1) at (7,0)[label=above:$w_{k-1}$]{};
  \node[vertex] (w_k) at (8,0)[label=above:$w_k$]{};
  \node[vertex] (x_1) at (9,0)[label=above:$x_1$]{};
  \node[vertex] (x_2) at (8.3,-1)[label=left:$x_2$]{};
  \node[vertex] (x_3) at (8.3,-2)[label=left:$x_3$]{};
  \node[vertex] (x_2k) at (9.7,-2)[label=right:$x_{2k}$]{};
  \node[vertex] (x_2k+1) at (9.7,-1)[label=right:$x_{2k+1}$]{};
 \node at (4,-1.3) {$C_1$};
 \node at (9,-1.3) {$C_2$};
  \draw[thick] (u_1)--(u_2);
  \draw[thick] (u_k-1)--(u_k)--(v_1)--(v_2)--(v_3);
  \draw[thick] (v_k)--(v_k+1)--(v_1)--(w_1)--(w_2);
  \draw[thick] (w_k-1)--(w_k)--(x_1)--(x_2)--(x_3);
  \draw[thick] (x_2k)--(x_2k+1)--(x_1);
  \draw[thick, dotted]   (u_2)--(u_k-1);
  \draw[thick, dotted]   (v_3)--(v_k);
  \draw[thick, dotted]   (w_2)--(w_k-1);
  \draw[thick, dotted]   (x_3)--(x_2k);
  \draw (5,-3)node{$G$};
  \end{tikzpicture}\\[12pt]
  \begin{tikzpicture}[scale = 0.7]
  \tikzstyle{vertex}=[circle,fill=black,minimum size=0.38em,inner sep=0pt]
  \node[vertex] (u_1) at (0,0)[label=above:$u_1$]{};
  \node[vertex] (u_2) at (1,0)[label=above:$u_2$]{};
  \node[vertex] (u_k-1) at (2,0)[label=above:$u_{k-1}$]{};
  \node[vertex] (u_k) at (3,0)[label=above:$u_k$]{};
  \node[vertex] (v_1) at (4,0)[label=above:$v_{C_1}$]{};
  \node[vertex] (w_1) at (5,0)[label=above:$w_1$]{};
  \node[vertex] (w_2) at (6,0)[label=above:$w_2$]{};
  \node[vertex] (w_k-1) at (7,0)[label=above:$w_{k-1}$]{};
  \node[vertex] (w_k) at (8,0)[label=above:$w_k$]{};
  \node[vertex] (x_1) at (9,0)[label=above:$v_{C_2}$]{};
  \draw[thick] (u_1)--(u_2);
  \draw[thick] (u_k-1)--(u_k)--(v_1)--(w_1)--(w_2);
  \draw[thick] (w_k-1)--(w_k)--(x_1);
  \draw[thick, dotted]   (u_2)--(u_k-1);
  \draw[thick, dotted]   (w_2)--(w_k-1);
  \draw (4.5,-1)node{$T_G$};
   \end{tikzpicture}
  \hspace{3em}
 \begin{tikzpicture}[scale = 0.7]
  \tikzstyle{vertex}=[circle,fill=black,minimum size=0.38em,inner sep=0pt]
  \node[vertex] (u_1) at (0,0)[label=above:$u_1$]{};
  \node[vertex] (u_2) at (1,0)[label=above:$u_2$]{};
  \node[vertex] (u_k-1) at (2,0)[label=above:$u_{k-1}$]{};
  \node[vertex] (u_k) at (3,0)[label=above:$u_k$]{};
  \node[vertex] (w_1) at (5,0)[label=above:$w_1$]{};
  \node[vertex] (w_2) at (6,0)[label=above:$w_2$]{};
  \node[vertex] (w_k-1) at (7,0)[label=above:$w_{k-1}$]{};
  \node[vertex] (w_k) at (8,0)[label=above:$w_k$]{};
  \draw[thick] (u_1)--(u_2);
  \draw[thick] (u_k-1)--(u_k);
  \draw[thick] (w_1)--(w_2);
  \draw[thick] (w_k-1)--(w_k);
  \draw[thick, dotted]   (u_2)--(u_k-1);
  \draw[thick, dotted]   (w_2)--(w_k-1);
  \draw (4.5,-1)node{$\Gamma_G$};
   \end{tikzpicture}
  \caption{An example for  $G, T_G$ and $\Gamma_G$}\label{fig1}
\end{figure}

Our second main result extends Theorem \ref{thm1.2} to general graphs, which establishes a sharp lower bound on the generalized $k$-independence number and characterizes all the corresponding extremal graphs for every integer $k\geq2$.
\begin{thm}\label{thm1.3}
Let $G$ be an $n$-vertex  graph with the dimension of cycle space $\omega(G)$. Then
\begin{equation}\label{eq1.1}
\alpha_k(G)\geq \frac{k-1}{k}[n-\omega(G)]
\end{equation}
for every integer $k\geq2.$ The equality holds if and only if all the following conditions hold for $G$
\begin{wst}
\item[{\rm (i)}] the cycles (if any) of $G$ are pairwise vertex-disjoint;
\item[{\rm (ii)}] each cycle (if any) of $G$ has length 1 modulo $k$;
\item[{\rm (iii)}] each connected component, say $T,$  of $\Gamma_G$ satisfies $ k \mid |V_T|$ and
$ T\in R_{|V_T|/k}^k.$
\end{wst}
\end{thm}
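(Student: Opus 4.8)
The plan rests on a reformulation used throughout: a set $S$ is generalized $k$-independent if and only if every connected component of $G[S]$ has at most $k-1$ vertices (a connected graph on at least $k$ vertices contains a spanning tree, hence a $k$-vertex subtree, and conversely any $k$-vertex subtree lies in a single component). I would prove the bound \eqref{eq1.1} by induction on $\omega(G)$. The base case $\omega(G)=0$ is exactly Theorem \ref{thm1.2}. For the inductive step, suppose $\omega(G)\geq 1$ and pick any vertex $v$ lying on a cycle. Deleting $v$ removes no edge among $V_G\setminus\{v\}$, so every generalized $k$-independent set of $G-v$ is one of $G$ and hence $\alpha_k(G)\geq\alpha_k(G-v)$. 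From $\omega(G)-\omega(G-v)=d_G(v)-1-\bigl(c(G-v)-c(G)\bigr)$ and the fact that the two cycle-neighbours of $v$ remain connected in $G-v$ through the rest of the cycle, we get $c(G-v)-c(G)\leq d_G(v)-2$, so $\omega(G-v)\leq\omega(G)-1$. Applying the induction hypothesis to $G-v$ then yields $\alpha_k(G)\geq\tfrac{k-1}{k}\bigl[(n-1)-\omega(G-v)\bigr]\geq\tfrac{k-1}{k}\bigl[n-\omega(G)\bigr]$, completing the lower bound.

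For the sufficiency of conditions (i)--(iii) I would prove the matching upper bound. Assume the three conditions and let $S$ be any generalized $k$-independent set. Condition (i) makes $V_G=U_G\sqcup\bigsqcup_j V(C_j)$ a genuine partition and forces $\omega(G)=t$, the number of cycles. For each cycle $C_j$ the trace $S\cap V(C_j)$ induces in $C_j$ a subgraph whose components (arcs) each lie inside a single component of $G[S]$, so they have at most $k-1$ vertices; hence $|S\cap V(C_j)|\leq\alpha_k(C_{\ell_j})=\ell_j-\lceil\ell_j/k\rceil$, which by (ii) equals $\tfrac{k-1}{k}(\ell_j-1)$. Condition (iii) lets me partition $U_G$ into blocks of size $k$, one for each $k$-vertex tree in the $R_i^k$-construction of the components of $\Gamma_G$; since $\Gamma_G$ is an induced subgraph, each block induces a connected $k$-vertex subgraph of $G$, so $S$ omits at least one of its vertices and contributes at most $k-1$. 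Summing the two estimates and using $\sum_j(\ell_j-1)+|U_G|=n-\omega(G)$ gives $|S|\leq\tfrac{k-1}{k}(n-\omega(G))$, which together with \eqref{eq1.1} forces equality.

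For necessity I would argue contrapositively, showing that the failure of any condition produces a strict inequality, organized as a refined induction on $\omega(G)$. If (i) fails, two cycles meet; a block-decomposition argument (using additivity of $\omega$ over blocks and the existence of a degree-$\geq 3$ vertex in any $2$-connected block with cycle-space dimension at least $2$) produces a vertex $v$ with $\omega(G-v)\leq\omega(G)-2$, whence $\alpha_k(G)\geq\alpha_k(G-v)\geq\tfrac{k-1}{k}(n-\omega(G)+1)>\tfrac{k-1}{k}(n-\omega(G))$. If (i) holds but some component of $\Gamma_G$ violates (iii), then deleting a suitable cycle-vertex (from a cycle disjoint from the offending component) reduces $\omega$ by exactly $1$ while preserving the violation, so induction applies, with base case $\omega(G)=0$ supplied by the strict part of Theorem \ref{thm1.1} applied componentwise. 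Finally, if (i) and (iii) hold but some cycle $C$ has length $\ell\not\equiv 1\pmod k$, I would exhibit a generalized $k$-independent set realizing the strict surplus $\alpha_k(C_\ell)-\tfrac{k-1}{k}(\ell-1)>0$ inside $C$ while contributing the tight amount on every other piece, deleting attachment vertices so that no induced component spreads across two pieces.

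The main obstacle is the necessity direction, and two points there need genuine care. First, the block-decomposition claim for (i)---that non-vertex-disjoint cycles always expose a vertex whose deletion lowers the cycle-space dimension by at least $2$---must be made clean, since a cut vertex shared between blocks changes the component count and one has to localize the computation of $\omega$ inside the relevant $2$-connected block. Second, and more delicate, is isolating the surplus of a single non-tight cycle (or a single non-extremal tree component) while it remains attached to the rest of $G$: the witnessing set must be built so that the strict local gain is not absorbed at the connecting edges, i.e.\ so that the global ``components of size $\leq k-1$'' constraint is respected across every attachment point, and so that this gain survives the inductive peeling of the remaining cycles. This attachment bookkeeping---rather than the bound itself---is where the proof is heaviest, and it is precisely what the refined estimates accompanying the theorem are designed to quantify.
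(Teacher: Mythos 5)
Your lower-bound argument and your sufficiency argument are both correct. The lower bound is the same induction on $\omega(G)$ as the paper's Lemma \ref{lem3.1}. Your sufficiency proof is a direct counting argument (bounding $|S\cap V_{C_j}|$ by $\alpha_k(C_{\ell_j})$ via the trace of $S$, and bounding $|S\cap U_G|$ by exhibiting $|U_G|/k$ vertex-disjoint connected $k$-sets, one per block of the $R^k$-decomposition); this is a valid and arguably cleaner variant of the paper's route, which instead deletes the attachment edges and invokes the subadditivity $\alpha_k(G)\le\alpha_k(\Gamma_G)+\sum_C\alpha_k(C)$.

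The necessity direction, however, has a genuine gap, and it sits exactly where you locate the difficulty. Your contrapositive plan for ``(i) holds but (iii) fails'' deletes a vertex from ``a cycle disjoint from the offending component,'' but such a cycle need not exist: take $G$ consisting of a single cycle $C_q$ with an offending tree $T_1$ attached to it by one edge. Worse, deleting a cycle vertex $x$ changes $\Gamma$: the surviving path $C-x$ (on $q-1$ vertices) is absorbed into $\Gamma_{G-x}$ and merges with every tree component of $\Gamma_G$ adjacent to $C$, so ``preserving the violation'' requires proving that the merged tree still fails the condition of Theorem \ref{thm1.1} --- a nontrivial closure property of the families $R^k_i$ that you do not establish. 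Similarly, in the ``(ii) fails'' case you propose to build a witnessing set realizing the surplus $\alpha_k(C_\ell)-\tfrac{k-1}{k}(\ell-1)>0$ while ``deleting attachment vertices''; those extra deletions cost against the count, and you give no argument that the local surplus survives globally. The paper sidesteps both problems by running the induction forward rather than contrapositively: Lemma \ref{lem3.3} shows that $k$-goodness is \emph{inherited} by $G-x$ for every cycle vertex $x$ (with $\omega$ dropping by exactly $1$), Lemma \ref{lem3.4} peels off pendant cycles and extracts $q\equiv1\pmod k$ from the $k$-goodness of $P_{q-1}$ (no witnessing set needed), and Lemma \ref{lem3.6} handles the remaining case by removing a deepest $k$-vertex rooted subtree of $T_G$ contained in $U_G$, carrying the identity $\alpha_k(G)=\alpha_k(\Gamma_G)+\sum_{C}\tfrac{k-1}{k}(|V_C|-1)$ through the induction so that condition (iii) falls out at the end from Theorem \ref{thm1.2}, with no need to track how violations transform under vertex deletion. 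To complete your proof you would either need to prove the missing $R^k$-closure/merging lemmas and the surplus-propagation construction, or reorganize along these forward lines.
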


\begin{ex}
If $G$ is the graph as depicted in Figure \ref{fig1}, then $G$ satisfies Theorem \ref{thm1.3} (i)-(iii) and $\alpha_k(G)=\frac{k-1}{k}[n-\omega(G)]$ holds, where $\alpha_k(G)=5(k-1), n=5k+2$ and $\omega(G)=2$.
\end{ex}
 The paper is organized as follows: Section 2 collects preliminaries; Section 3 proves Theorem 1.1; Section 4 proves Theorem 1.3 and presents quantitative refinements; Appendix A gives a linear-time algorithm.

\section{\normalsize Preliminary results}\setcounter{equation}{0}\label{sec2}
In this section, we present some initial findings that will serve as the basis for proving our main results.
 The following  result
immediately follows from the definition of the  generalized $k$-independence number.
\begin{lem}\label{lem2.1}
Let $G=(V_G,E_G)$ be a simple graph. Then
\begin{wst}
\item[{\rm (i)}] $\alpha_k(G)-1\leq\alpha_k(G-v)\leq\alpha_k(G)$ for any $v\in V_G;$
\vspace{1mm}
\item[{\rm (ii)}] $\alpha_k(G-e)\geq \alpha_k(G)$ for any $e\in E_G$.
\end{wst}
\end{lem}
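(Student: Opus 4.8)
The plan is to prove all three inequalities directly from the definition, by tracking how the induced subgraph $G[S]$ and the ``contains no $k$-vertex tree'' property behave under deleting a vertex or an edge. The one fact I would record at the outset is a monotonicity principle for the forbidden structure: if $H'$ is a subgraph of $H$ and $H$ contains no tree on $k$ vertices, then neither does $H'$, since any such tree inside $H'$ would also sit inside $H$. (Equivalently, $S$ is generalized $k$-independent precisely when every connected component of $G[S]$ has at most $k-1$ vertices.) This single observation drives every case, so there is no real difficulty beyond getting the directions of the inclusions right.

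For the upper bound in (i), I would start from a maximum generalized $k$-independent set $S'$ of $G-v$. Since $v\notin S'$, the induced subgraphs coincide, $(G-v)[S']=G[S']$, so $S'$ is still generalized $k$-independent in $G$ and hence $\alpha_k(G)\geq|S'|=\alpha_k(G-v)$. For the lower bound I would instead take a maximum generalized $k$-independent set $S$ of $G$ and pass to $S'=S\setminus\{v\}$. Then $(G-v)[S']=G[S']$ is an induced subgraph of $G[S]$, which contains no $k$-vertex tree; by the monotonicity principle neither does $G[S']$, so $S'$ is generalized $k$-independent in $G-v$, giving $\alpha_k(G-v)\geq|S'|\geq|S|-1=\alpha_k(G)-1$.

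For (ii), I would again take a maximum generalized $k$-independent set $S$ of $G$ and argue that the very same set is admissible in $G-e$. Deleting an edge changes no vertices, so $(G-e)[S]$ is a spanning subgraph of $G[S]$ obtained by discarding $e$ (if both its endpoints lie in $S$); any tree on $k$ vertices contained in $(G-e)[S]$ would therefore be a subtree of $G[S]$, contradicting that $S$ is generalized $k$-independent in $G$. Hence $S$ remains generalized $k$-independent in $G-e$, and $\alpha_k(G-e)\geq|S|=\alpha_k(G)$.

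The expected ``hard part'' is essentially nonexistent: the only thing demanding care is the direction of the subgraph inclusions, namely that vertex deletion shrinks an optimal set by at most one while keeping induced subgraphs inside induced subgraphs, and that edge deletion leaves vertex sets untouched while only ever destroying, never creating, a $k$-vertex subtree. The lemma is intended purely as a monotonicity toolkit underpinning the inductive arguments of the later sections, so I would keep the write-up short and formula-light.
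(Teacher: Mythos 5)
Your proof is correct and matches the paper's intent exactly: the paper states this lemma without proof as an immediate consequence of the definition, and your argument (deleting a vertex or edge can only destroy, never create, a $k$-vertex subtree in the induced subgraph, so optimal sets transfer with loss at most one vertex) is precisely the routine verification being omitted. No gaps.
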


\begin{lem}\label{lem2.2}
Let $P_n$ and $C_n$ be a path and a cycle on $n$ vertices, respectively. Then
\begin{wst}
\item[{\rm (i)}]$\alpha_k(P_n)=\left\lceil\frac{k-1}{k}n\right\rceil;$
\vspace{1mm}
\item[{\rm (ii)}]$\alpha_k(C_n)=\left\lfloor\frac{k-1}{k}n\right\rfloor.$
\end{wst}
\end{lem}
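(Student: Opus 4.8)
The plan is to first translate the condition defining a generalized $k$-independent set into an elementary statement about consecutive vertices, and then to treat each part by a matching pair consisting of an upper bound and an explicit construction. The key reduction is the observation that a set $S$ is generalized $k$-independent if and only if every connected component of $G[S]$ has at most $k-1$ vertices: a connected graph on at least $k$ vertices contains a spanning tree and hence (by deleting leaves) a subtree on exactly $k$ vertices, while conversely any $k$-vertex tree is connected and must lie inside a single component. For $G=P_n$ or $G=C_n$ the components of $G[S]$ are precisely the maximal arcs of consecutively chosen vertices, so the defining condition becomes simply: no $k$ consecutive vertices (along the path, respectively around the cycle) all lie in $S$.

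For part (i), I would prove the upper bound by a disjointness argument. Writing $n=qk+r$ with $0\le r\le k-1$ and $q=\lfloor n/k\rfloor$, partition the vertices $1,\dots,n$ of $P_n$ into $q$ consecutive blocks of length $k$ together with a trailing block of length $r$. Each length-$k$ block must omit at least one vertex of $S$, for otherwise it would be a run of $k$ consecutive chosen vertices; hence $|V_{P_n}\setminus S|\ge q$ and $\alpha_k(P_n)\le n-q$. For the matching lower bound I would exhibit the set obtained by deleting exactly the vertices in positions $k,2k,\dots,qk$: the surviving runs have lengths $k-1,k-1,\dots,k-1,r$, all at most $k-1$, so this set is generalized $k$-independent and has size $n-q$. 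It then remains to verify the arithmetic identity $n-\lfloor n/k\rfloor=\lceil\frac{k-1}{k}n\rceil$, which follows by substituting $n=qk+r$ and checking the two cases $r=0$ and $1\le r\le k-1$.

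For part (ii), I would argue directly on the cycle rather than via Lemma \ref{lem2.1}(ii), since deleting an edge only yields the weaker estimate $\alpha_k(C_n)\le\alpha_k(P_n)=\lceil\frac{k-1}{k}n\rceil$. As $n\ge k$, the full vertex set is inadmissible (it induces $C_n$, which contains a $k$-vertex path), so any generalized $k$-independent set $S$ has nonempty complement of some size $t$. These $t$ unchosen vertices cut the cycle into exactly $t$ arcs of chosen vertices whose lengths sum to $n-t$, and admissibility forces each arc length to be at most $k-1$; therefore $n-t\le t(k-1)$, giving $t\ge n/k$ and hence $t\ge\lceil n/k\rceil$. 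This yields $\alpha_k(C_n)\le n-\lceil n/k\rceil=\lfloor\frac{k-1}{k}n\rfloor$. For sharpness I would place $\lceil n/k\rceil$ unchosen vertices so as to create $q$ arcs of length $k-1$ and, when $r\ge 1$, one further arc of length $r-1$, which realizes the bound.

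The routine parts are the two constructions and the floor/ceiling identities; the step deserving the most care is the cycle counting, where the wrap-around must be handled so that $t$ unchosen vertices produce exactly $t$ arcs (not $t\pm1$) and so that the extremal deletion pattern genuinely exists for every residue $r$. This is precisely the point that distinguishes the floor in (ii) from the ceiling in (i).
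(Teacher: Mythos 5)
Your proposal is correct. Part (i) coincides with the paper's argument: the same partition into $q$ blocks of $k$ consecutive vertices plus a trailing block of size $r$ for the upper bound, and the same ``delete every $k$-th vertex'' construction for the lower bound. The only genuine divergence is in the upper bound for part (ii): the paper double-counts incidences between the $n$ cyclic $k$-windows $W_i$ and the unchosen vertices (every window contains an unchosen vertex, every unchosen vertex lies in exactly $k$ windows, hence $n\le k\,|V_{C_n}\setminus S|$), whereas you decompose the chosen vertices into the $t$ cyclic gaps determined by the $t$ unchosen vertices and bound each gap by $k-1$, obtaining $n-t\le t(k-1)$. The two inequalities are equivalent, and both hinge on the same preliminary observation that $S$ is admissible iff no $k$ consecutive vertices are all chosen (which you justify more carefully than the paper does, via spanning trees and leaf-deletion). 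Your arc count is slightly more elementary and makes the extremal configurations visible (all gaps of length $k-1$ except possibly one shorter gap), while the paper's window argument avoids having to discuss empty gaps at all; you correctly flag that the $t$ gaps may be empty and that the inequality survives this, and your verification that $t\ge 1$ (using the standing assumption $n\ge k$) closes the one place where the arc argument could have broken down. Your explicit constructions in (ii) for the cases $r=0$ and $r\ge 1$ match the paper's.
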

\begin{proof}
Write \( n = kq + r \) with integers \( q \geq 1 \) and \( 0 \leq r \leq k - 1 \).

(i)\ On the one hand,  partition $V_{P_n}$ into \( q \) disjoint blocks of \( k \) consecutive vertices and one final block of size \( r \) (possibly \( r = 0 \)). In each full \( k \)-block, a generalized \( k \)-independent set can contain at most \( k - 1 \) vertices, otherwise it would contain \( k \) consecutive chosen vertices and hence a \( P_k \). The final block of size \( r \) contributes at most \( r \) vertices. Hence
\begin{eqnarray}\label{eq2.1}
\alpha_k(P_n) \leq q(k - 1) + r = n - q = n - \left\lfloor \frac{n}{k} \right\rfloor = \left\lceil \frac{k - 1}{k} n \right\rceil.
\end{eqnarray}

On the other hand, assume $P_n=u_1u_2 \cdots u_n $ and choose \(
S^* = \{u_i : i \notin \{k, 2k, 3k, \ldots, qk\}\},
\) i.e., delete every \( k \)-th vertex along the path. Then between any two deleted vertices there are exactly \( k - 1 \) consecutive kept vertices, and the terminal segment (if \( r > 0 \)) has length \( r < k \). Thus there is no run of \( k \) consecutive chosen vertices, so \( S^* \) is a generalized \( k \)-independent set of $P_n$. This  gives
\begin{eqnarray}\label{eq2.2}
\alpha_k(P_n) \geq|S^*| = n - q = \left\lceil \frac{k-1}{k} n \right\rceil.
\end{eqnarray}
Therefore, $\alpha_k(P_n) =\left\lceil \frac{k-1}{k} n \right\rceil$ by \eqref{eq2.1}-\eqref{eq2.2}.

\vspace{1mm}

(ii)\ Assume that $C_n=v_1v_2\cdots v_nv_1$. Consider the \( n \) cyclic $k$-windows $
W_i = \{v_i, v_{i+1}, \ldots, v_{i+k-1}\}\, (1\leq i\leq n),$ where the subscripts are  modulo $n$.
Let \( S \) be a generalized \( k \)-independent set of $C_n$. Consider the set
\begin{eqnarray*}
\mathcal{A} := \left\{(i,x) : i \in \{1,\ldots,n\}, \, x \in W_i \cap (V_{C_n} \setminus S)\right\}.
\end{eqnarray*}
Note that every window \( W_i \)  contains at least one vertex not in \( S \); otherwise \( W_i \subseteq S \) would be a \( P_k \). Each window \(W_i\) contributes at least one ordered pair to \(\mathcal{A}\). Therefore,
\begin{eqnarray}\label{eq2.1a}
|\mathcal{A}| \geq n.
\end{eqnarray}
Fix a vertex \( x = v_j \in V_{C_n} \setminus S \). The windows that contain \( x \) are precisely
\(
W_{j-k+1}, W_{j-k+2}, \ldots, W_{j-1}, W_j,
\)
\( k \) windows in total. Thus \( x \) appears in exactly \( k \) ordered pairs of \( \mathcal{A} \). Summing over all \( x \in V_{C_n} \setminus S \) gives
\begin{eqnarray}\label{eq2.1b}
|\mathcal{A}| = \sum_{x \in V_{C_n} \setminus S} k = k \, |V_{C_n} \setminus S|.
\end{eqnarray}
Combining  \eqref{eq2.1a}-\eqref{eq2.1b} gives $n \leq k |V_{C_n} \setminus S|.$ Therefore,
\begin{eqnarray}\label{eq2.3}
 \alpha_k(C_n)=|S| \leq n - \left\lceil \frac{n}{k} \right\rceil = \left\lfloor \frac{k-1}{k} n \right\rfloor.
\end{eqnarray}

If \( r = 0 \) (i.e., \( k \mid n \)), then delete every \( k \)-th vertex around the cycle. The remaining vertices come in runs of length exactly \( k - 1 \), so no \( P_k \) appears. This means they form a generalized $k$-independent set in $C_n$ and the set has size \( n - n/k = \left\lfloor \frac{k-1}{k} n \right\rfloor \).  This together with \eqref{eq2.3} implies that $\alpha_k(C_n)=\left\lfloor \frac{k-1}{k} n \right\rfloor.$

If \( 1 \leq r \leq k - 1 \), then delete every \( k \)-th vertex and one additional vertex, say \( v_1 \). Then the consecutive runs of remaining  vertices between deletions have lengths all less than \( k \). Hence no \( P_k \) occurs, and the set has size $
n - (q + 1) = (kq + r) - (q + 1) = (k - 1)q + (r - 1) = \left\lfloor \frac{k - 1}{k} n \right\rfloor.$
By a similar argument, we obtain $\alpha_k(C_n)=\left\lfloor \frac{k-1}{k} n \right\rfloor.$

\end{proof}
From the definition of the dimension of  cycle space, the following conclusion is straightforward.
For completeness, a detailed proof is provided herein.
\begin{lem}\label{lem2.3}
Let $G$ be a graph with $x\in V_G$.
\begin{wst}
\item[{\rm (i)}] If $x$ lies on no cycle of $G$, then $\omega(G)=\omega(G-x);$
\item[{\rm (ii)}] If $x$ lies on a cycle of $G$, then $\omega(G-x)\leq \omega(G)-1;$
\item[{\rm (iii)}] If the cycles of $G$ are pairwise vertex-disjoint, then $\omega(G)$ precisely equals the number of cycles in $G$.
\end{wst}
\end{lem}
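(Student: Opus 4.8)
The plan is to reduce all three parts to the defining identity $\omega(H)=|E_H|-|V_H|+c(H)$ and to track how the number of components changes under a single deletion. Deleting a vertex $x$ removes one vertex and $d(x)$ edges, so
\[
\omega(G-x)-\omega(G)=1-d(x)+\bigl(c(G-x)-c(G)\bigr).
\]
Thus parts (i) and (ii) amount entirely to controlling $c(G-x)-c(G)$, which in turn only concerns how the component $C$ containing $x$ fragments once $x$ is removed. The routine part of the argument is this arithmetic; the substantive step is the structural translation ``a path between two neighbours of $x$ that avoids $x$'' $\Longleftrightarrow$ ``a cycle through $x$,'' and I expect the exact component count needed in (i) to be the most delicate point.

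For (i), assume $x$ lies on no cycle. I claim $C-x$ has exactly $d(x)$ components (with $d(x)=0$ being the isolated-vertex case, where $C=\{x\}$ and $C-x$ is empty). On one hand, every component of $C-x$ contains a neighbour of $x$, since $C$ is connected. On the other hand, two distinct neighbours $y,z$ of $x$ cannot lie in one component of $C-x$: a path joining $y$ and $z$ inside $C-x$ together with the edges $xy$ and $xz$ would be a cycle through $x$, contrary to hypothesis. Hence the neighbours are in bijection with the components of $C-x$, so $c(G-x)-c(G)=d(x)-1$, and the displayed identity gives $\omega(G-x)-\omega(G)=0$.

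For (ii), assume $x$ lies on a cycle and let $y,z$ be its two neighbours on that cycle. The remaining portion of the cycle is a path joining $y$ and $z$ that avoids $x$, so $y$ and $z$ lie in the same component of $C-x$. Consequently $C-x$ has at most $d(x)-1$ components, whence $c(G-x)-c(G)\le d(x)-2$. Substituting into the identity yields $\omega(G-x)-\omega(G)\le -1$, i.e.\ $\omega(G-x)\le\omega(G)-1$.

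For (iii), I argue by induction on $\omega(G)$. If $G$ is acyclic then $\omega(G)=0$ and $G$ has no cycles, which is the base case. Otherwise fix a cycle $C$ and an edge $e\in E_C$. Since all cycles of $G$ are pairwise vertex-disjoint they are pairwise edge-disjoint, so $e$ lies on no cycle other than $C$; hence $G-e$ has precisely one fewer cycle than $G$, and its cycles remain pairwise vertex-disjoint. Because $e$ lies on a cycle it is not a bridge, so $c(G-e)=c(G)$ and therefore $\omega(G-e)=\omega(G)-1$. Applying the induction hypothesis to $G-e$, the number of cycles of $G-e$ equals $\omega(G-e)=\omega(G)-1$; adding back $C$ shows that $G$ has exactly $\omega(G)$ cycles.
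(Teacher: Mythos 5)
Your proof is correct and follows essentially the same route as the paper: parts (i) and (ii) are handled by the identity $\omega(G-x)-\omega(G)=1-d(x)+(c(G-x)-c(G))$ together with a count of how the component containing $x$ fragments, and part (iii) by deleting one non-bridge edge per cycle. The only differences are cosmetic — you justify the exact component count in (i) via the neighbour--component bijection, which the paper merely asserts, and you phrase (iii) as an induction rather than an iterative edge-removal process.
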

\begin{proof}

(i)\  Let $d=d_G(x)$. If $x$ lies on no cycle, then all edges incident with $x$ are bridges.
Deleting $x$ removes $d$ bridges and one vertex, and splits its original connected component into exactly $d$ connected components; thus
$c(G-x)=c(G)-1+d$. With $|V_{G-x}|=|V_G|-1$ and $|E_{G-x}|=|E_G|-d$, we get
\begin{equation*}
\omega(G-x)=(|E_G|-d)-(|V_G|-1)+(c(G)-1+d)=\omega(G).
\end{equation*}

(ii)\ If $x$ lies on a cycle, then the connected component containing $x$ can split into at most $d-1$ additional connected components, so
$c(G-x)\leq c(G)+d-2$. Therefore
\begin{equation*}
\omega(G-x)= (|E_G|-d) - (|V_G|-1) + c(G-x)
\leq |E_G|-|V_G|+c(G)-1 = \omega(G)-1,
\end{equation*}

(iii)\  If the cycles of $G$ are pairwise vertex-disjoint, then
remove, one by one, a single edge from each cycle in $\mathscr{C}_G$.
At each step, the removed edge lies on a cycle and hence is not a bridge, so the number of connected  components does not change and the dimension of  cycle space of the current graph decreases by exactly $1$.
Because cycles are vertex-disjoint, removing an edge from one cycle does not affect the others, and after $|\mathscr{C}_G|$ steps the resulting graph is acyclic, whence its cycle-space dimension is $0$.
Therefore $\omega(G)=|\mathscr{C}_G|$.
\end{proof}

\section{\normalsize Proof of  Theorem \ref{thm1.1}}\setcounter{equation}{0}\label{sec3}
In this section, we prove Theorem \ref{thm1.1}, which determines a sharp lower bound on the generalized
$k$-independence number among  all trees of a fixed order, and we also characterize the  extremal graphs that
attain this bound.

Root $T$ at a vertex $r;$ the unique path $rTv$ from $r$ to
$v$ defines the \textit{level} of $v$ as its length. A vertex on  $rTv$ other
than  $v$ is  an \textit{ancestor} of
$v$;  each vertex with $v$ as its ancestor is a \textit{descendant} of $v$.
The immediate ancestor of $v$ is its \textit{parent}, and the vertices whose parent is
$v$ are its \textit{children}.

\vspace{2mm}
\noindent{\bf Proof of Theorem \ref{thm1.1}.}\ Observe that a subset \( S \subseteq V_T \)
is a generalized \( k \)-independent set of $T$ if and only if every connected component of
\( T[S] \) has at most \( k - 1 \) vertices. Therefore, if we let
\begin{equation*}
\tau_k(T) := \min\{|X| : \hbox{ every connected component of } T - X \hbox{ has at most  } k - 1 \hbox{ vertices}\},
\end{equation*}
then
\begin{eqnarray}\label{3.1}
\alpha_k(T) = n - \tau_k(T).
\end{eqnarray}

\begin{claim}
For every $k\geq2$, there exists a subset \( X \subseteq V_T \) with
\(
|X| \leq \left\lfloor \frac{n}{k} \right\rfloor
\)
such that every connected component of \( T - X \) has at most \( k - 1 \) vertices.

\end{claim}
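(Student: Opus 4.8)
The plan is to prove the claim by establishing $\tau_k(T)\le\left\lfloor\frac{n}{k}\right\rfloor$; combined with \eqref{3.1} this immediately yields
\begin{equation*}
\alpha_k(T)=n-\tau_k(T)\ge n-\left\lfloor\frac{n}{k}\right\rfloor=\left\lceil\frac{k-1}{k}n\right\rceil\ge\frac{k-1}{k}n,
\end{equation*}
which is exactly the bound asserted in Theorem \ref{thm1.1}. I would argue by induction on $n$. The base case $n\le k-1$ is trivial: here $\left\lfloor\frac{n}{k}\right\rfloor=0$ and $X=\emptyset$ works, since $T$ is a single component on fewer than $k$ vertices.

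For the inductive step, assume $n\ge k$ and root $T$ at an arbitrary vertex $r$. For a vertex $v$ write $T_v$ for the subtree consisting of $v$ together with all its descendants, and $|T_v|$ for its number of vertices. Since $|T_r|=n\ge k$, the set of vertices $v$ with $|T_v|\ge k$ is nonempty; I would choose such a $v$ of \emph{maximum level} (a deepest one). Maximality then forces $|T_u|\le k-1$ for every child $u$ of $v$, because a child whose subtree had $\ge k$ vertices would sit at a strictly larger level and contradict the choice of $v$. I then place $v$ into $X$: deleting $v$ detaches each child subtree $T_u$ into a separate component of order at most $k-1$, and at the same time removes the whole block $T_v$ (of order $\ge k$) from further consideration.

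The recursion is applied to $T':=T-T_v$, which is again a tree (deleting all vertices of a rooted subtree leaves the remaining ``upper'' part connected) on $n'=n-|T_v|\le n-k$ vertices. By the induction hypothesis there is a set $X'\subseteq V_{T'}$ with $|X'|\le\left\lfloor\frac{n'}{k}\right\rfloor$ splitting $T'$ into components of order $\le k-1$. Setting $X=\{v\}\cup X'$, every component of $T-X$ has order at most $k-1$: the child subtrees $T_u$ are finalized, disjoint from $X'$, and separated from everything else by the removal of $v$, while the components of $T'-X'$ are controlled by induction. The cardinality bound then follows from the floor arithmetic
\begin{equation*}
|X|=1+|X'|\le 1+\left\lfloor\frac{n-|T_v|}{k}\right\rfloor\le 1+\left\lfloor\frac{n-k}{k}\right\rfloor=\left\lfloor\frac{n}{k}\right\rfloor,
\end{equation*}
using monotonicity of the floor function and $|T_v|\ge k$.

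The single point requiring care --- and the only place the argument can fail --- is the choice of the cut vertex $v$: it must be \emph{heavy} enough ($|T_v|\ge k$) so that one deletion strips off a full budget's worth of at least $k$ vertices, yet \emph{all of its children must already be light} ($|T_u|\le k-1$) so that the detached pieces are automatically valid components. Taking a deepest heavy vertex is exactly what reconciles these two requirements, and it is what keeps the $\frac1k$ deletion rate on budget. An equivalent way to package the same idea is a post-order sweep that charges each deleted vertex to the $\ge k$ vertices it resolves; since detached child subtrees are never revisited, these charges are pairwise disjoint, again giving $k\,|X|\le n$.
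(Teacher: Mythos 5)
Your proposal is correct and is essentially the paper's own argument: both select a deepest vertex $v$ whose rooted subtree has at least $k$ vertices (so all its children's subtrees are already small), delete $v$, and charge that deletion to the $\geq k$ vertices of $T^v$. The only difference is presentational --- you run the argument as an induction on $n$ with the floor-arithmetic bound $1+\left\lfloor\frac{n-k}{k}\right\rfloor=\left\lfloor\frac{n}{k}\right\rfloor$, whereas the paper iterates the greedy step and counts $tk\leq s_1+\cdots+s_t\leq n$ at the end.
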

\begin{proof}
Root \( T \) arbitrarily. For a vertex \( v \), let  $T^v$ be a subtree
of $T$ rooted at $v$. While $T$ has at least \( k \) vertices, choose a vertex \( v\in V_{T} \) such that \( |V_{T^v}| \geq k \), and subject to this condition,
the level of $v$ is as large as possible. By choice, every descendant \( u \) of \( v \) satisfies \( |V_{T^u}| \leq k-1 \). Put \( v \) into \( X \) and delete \( v \). Then every former child subtree becomes a connected component of order at most \( k-1 \) and will never need further deletions.

Let the chosen vertices be \( v_{1},\ldots,v_{t} \) in order, and write \( s_{i}:=|V_{T^{v_i}}| \) at the moment \( v_{i} \) is chosen. Each step removes exactly \( s_{i} \) vertices from the current tree (the vertex \( v_{i} \) plus all vertices in its child subtrees), so after \( t \) steps fewer than \( k \) vertices remain. Hence,
\begin{eqnarray*}
n = (s_{1}+\cdots+s_{t})+r \quad \text{with } s_{i} \geq k \text{ and } 0 \leq r < k,
\end{eqnarray*}
(the remainder $r<k$ vertices form the final connected component). Therefore, \( tk \leq s_{1}+\cdots+s_{t} \leq n \), i.e. \( t \leq \lfloor n/k \rfloor \). Taking \( X = \{v_{1},\ldots,v_{t}\} \) gives the claim. and then (1) yields the bound on \( \alpha_{k}(T) \).
\end{proof}

Claim 1 and \eqref{3.1} yield  \( \alpha_{k}(T)\geq n-\left\lfloor \frac{n}{k} \right\rfloor\geq \frac{k-1}{k}n \). In the following, we show  $\alpha_{k}(T)=\frac{k-1}{k}n $ if and only if $T\in R_{n/k}^k.$

\vspace{1mm}
For ``necessity", assume $\alpha_k(T) = \frac{k-1}{k} n$, necessarily $k\mid n$. Then
\(
n - \alpha_k(T) = \tau_k(T) = \frac{n}{k}.
\)
Run the proof  from Claim 1. Recall the notation $s_1, \ldots, s_t$ (orders of the removed rooted subtrees) and $r < k$ with
\begin{eqnarray*}
tk =n= (s_1 + \cdots + s_t) + r, \quad s_i \geq k.
\end{eqnarray*}
The identity then forces
\(s_1 = \cdots = s_t = k\) and  \( r = 0. \)

That is to say, at each step there is a vertex $v$ whose rooted subtree has exactly $k$ vertices; removing that single vertex detaches a  $k$-vertex tree (its whole subtree) from the remainder.

Therefore, iterating the deletions decomposes $T$ into $t=\frac{n}{k}$ vertex-disjoint subtrees of order $k$, each joined to the previously remaining part by a single edge (the edge from the removed vertex to its parent). Reversing this process constructs $T$ from a single $k$-vertex tree by repeatedly attaching a $k$-vertex tree with a single edge.  Consequently, $T\in R^{k}_{n/k}$.

\vspace{1mm}
For ``sufficiency", suppose   $T\in R_{n/k}^{k}$. Fix the natural ``$k$-block tree" order in which $T$ is obtained: at each step a new $k$-vertex tree (a ``$k$-block") is attached to the current graph by a single edge. For each non-root $k$-block, delete the endpoint inside that new $k$-block of its unique attaching edge; additionally, delete any one vertex in the root $k$-block. Exactly one vertex per $k$-block is deleted, so altogether $n/k$ vertices are deleted from $T$. Because every inter $k$-block edge has an endpoint deleted (the endpoint in the child $k$-block), all inter $k$-block edges disappear, what remains is a  forest in which every component has size $k-1$. Hence,
\(
\alpha_{k}(T) \geq n - \frac{n}{k} = \frac{k-1}{k}n.
\)
On the other hand, since $T$ contains $n/k$ vertex-disjoint $k$-vertex trees (the $k$-blocks), any generalized $k$-independent set must delete at least one vertex from each $k$-block; therefore $\alpha_{k}(T) \leq n - n/k$. Thus $\alpha_{k}(T) = \frac{k-1}{k}n$.

 \qed

\section{\normalsize Proof of  Theorem \ref{thm1.3}}\setcounter{equation}{0}\label{sec4}
In this section, we give  a proof for Theorem \ref{thm1.3}, which
establishes a sharp lower bound on the generalized $k$-independence number of a general graph. The corresponding extremal graphs are also characterized. More specifically, we give the proof  according to the following steps. We first show that the inequality in \eqref{eq1.1} holds. Then we present a few technical lemmas. Finally, we characterize all the graphs which attain the equality in \eqref{eq1.1}.
Without loss of generality,  assume that $G$ is  connected.
\vspace{2mm}

\begin{lem}\label{lem3.1}
The inequality \eqref{eq1.1} holds.
\end{lem}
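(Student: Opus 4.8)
The plan is to induct on the cycle-space dimension $\omega(G)$, stripping away one cycle at a time by deleting a single vertex that lies on it. All three ingredients are already available: Theorem~\ref{thm1.2} handles the acyclic base case, Lemma~\ref{lem2.1}(i) guarantees that deleting a vertex cannot increase $\alpha_k$ (so $\alpha_k(G)\geq\alpha_k(G-x)$), and Lemma~\ref{lem2.3}(ii) guarantees that deleting a vertex on a cycle drops $\omega$ by at least one.

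For the base case $\omega(G)=0$, the graph is acyclic and Theorem~\ref{thm1.2} gives $\alpha_k(G)\geq\frac{k-1}{k}n=\frac{k-1}{k}[n-\omega(G)]$ at once. For the inductive step, I would suppose $\omega(G)\geq1$ and that the bound holds for all graphs of strictly smaller cycle-space dimension. Then $G$ contains a cycle; fix a vertex $x$ on it. By Lemma~\ref{lem2.3}(ii), $\omega(G-x)\leq\omega(G)-1$, while $G-x$ has $n-1$ vertices. Applying the induction hypothesis to $G-x$ and using $\frac{k-1}{k}>0$ together with $-\omega(G-x)\geq-(\omega(G)-1)$, I would obtain
\begin{equation*}
\alpha_k(G-x)\geq\frac{k-1}{k}\bigl[(n-1)-\omega(G-x)\bigr]\geq\frac{k-1}{k}\bigl[n-\omega(G)\bigr].
\end{equation*}
Combining this with $\alpha_k(G)\geq\alpha_k(G-x)$ from Lemma~\ref{lem2.1}(i) closes the induction.

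This argument is essentially routine; there is no deep obstacle, only one point of care. Although we have assumed $G$ connected, the graph $G-x$ may be disconnected, so the induction hypothesis must be stated for arbitrary graphs, not merely connected ones. This is legitimate because $\alpha_k$, the order, and $\omega$ are all additive over connected components, so the general inequality reduces componentwise to the connected case, and Theorem~\ref{thm1.2} already covers disconnected acyclic graphs in the base case. The arithmetic is exact — the loss of one vertex is compensated precisely by the unit drop in $\omega$, since $(n-1)-(\omega(G)-1)=n-\omega(G)$ — so the bound is propagated without slack; it is precisely this tightness that will make the subsequent equality analysis (conditions (i)--(iii)) the genuinely delicate part of the paper.
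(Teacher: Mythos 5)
Your proof is correct and follows essentially the same route as the paper: induction on $\omega(G)$, with the acyclic base case supplied by Theorem~\ref{thm1.2} and the inductive step carried out by deleting a vertex on a cycle and invoking Lemma~\ref{lem2.1}(i) together with Lemma~\ref{lem2.3}(ii). Your explicit remark that $G-x$ may be disconnected, so the induction hypothesis must be stated for arbitrary graphs and reduced componentwise, is a point the paper glosses over after its ``without loss of generality connected'' reduction, and it is worth making.
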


\begin{proof}
We show \eqref{eq1.1} holds by induction on $\omega(G).$  If $\omega(G)=0$, then $G$ is a tree and the result follows immediately by Theorem \ref{thm1.1}.  Now assume that $\omega(G) \geq 1,$  i.e., $G$ has at least one cycle. Let $x$ be a vertex lying on some cycle.  By Lemmas \ref{lem2.1}(i) and \ref{lem2.3}(ii), we have
\begin{eqnarray}\label{eq3.1}
\alpha_k(G)\geq \alpha_k(G-x),\ \ \omega(G)\geq \omega(G-x)+1.
\end{eqnarray}
Applying the induction hypothesis yielding
\begin{eqnarray}\label{eq3.2}
\alpha_k(G-x)\geq\frac{k-1}{k}[n-1-\omega(G-x)].
\end{eqnarray}
Therefore,  \eqref{eq3.1}-\eqref{eq3.2} lead to
\begin{eqnarray*}
\alpha_k(G)\geq \frac{k-1}{k}[n-\omega(G)],
\end{eqnarray*}
as desired.
\end{proof}

For convenience, a graph is called \textit{$k$-good} if it achieves equality in \eqref{eq1.1}. Note that if $G$ is $k$-good, then $k\mid (n-\omega(G))$. In the following, we aim to provide some fundamental characterizations of $k$-good graphs. The following result is a  direct consequence of Lemma \ref{lem3.1}.

\begin{lem}\label{lem3.2}
A disconnected graph is $k$-good if and only if each connected component of it is $k$-good.
\end{lem}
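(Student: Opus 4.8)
The plan is to prove Lemma \ref{lem3.2} in both directions, leveraging the additivity of $\alpha_k$ and $\omega$ over connected components together with the general lower bound already established in Lemma \ref{lem3.1}. Write $G=\bigcup_{i=1}^{l}G_i$ where each $G_i$ is a connected component on $n_i$ vertices, so that $n=\sum_i n_i$, $\alpha_k(G)=\sum_i \alpha_k(G_i)$, and $\omega(G)=\sum_i \omega(G_i)$ (the last because the cycle space of a disjoint union splits as a direct sum, or equivalently from the definition $\omega=|E|-|V|+c$ summed componentwise). The key inequality for each component is the one from Lemma \ref{lem3.1}, namely
\begin{equation}\label{eq-comp}
\alpha_k(G_i)\geq \tfrac{k-1}{k}\bigl[n_i-\omega(G_i)\bigr]\qquad(1\leq i\leq l).
\end{equation}

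For the direction ``each component $k$-good $\Rightarrow$ $G$ is $k$-good'', I would simply sum the componentwise equalities: if $\alpha_k(G_i)=\tfrac{k-1}{k}[n_i-\omega(G_i)]$ for every $i$, then adding over $i$ and using the three additivity identities yields $\alpha_k(G)=\tfrac{k-1}{k}[n-\omega(G)]$, so $G$ attains equality in \eqref{eq1.1} and is $k$-good. This direction is immediate.

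For the converse, suppose $G$ is $k$-good but some component, say $G_j$, is \emph{not} $k$-good. By \eqref{eq-comp} the inequality $\alpha_k(G_i)\geq \tfrac{k-1}{k}[n_i-\omega(G_i)]$ holds for all $i$, and for the index $j$ it is strict. Summing over $i$ then gives
\begin{equation*}
\alpha_k(G)=\sum_{i=1}^{l}\alpha_k(G_i)>\sum_{i=1}^{l}\tfrac{k-1}{k}\bigl[n_i-\omega(G_i)\bigr]=\tfrac{k-1}{k}\bigl[n-\omega(G)\bigr],
\end{equation*}
which contradicts the assumption that $G$ attains equality in \eqref{eq1.1}. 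Hence every component must be $k$-good, completing the proof.

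There is essentially no obstacle here: the entire argument rests on the fact that both sides of \eqref{eq1.1} are additive over connected components, so that the single strictly-loose summand in the converse direction forces a strict global inequality. The only point that warrants an explicit (if routine) justification is the additivity of $\omega$, which follows because disjoint components share no edges and contribute independently to $|E|$, $|V|$, and $c$; I would state this once and then treat all three quantities as additive throughout.
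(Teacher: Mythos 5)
Your proof is correct and follows exactly the route the paper intends: the paper states this lemma as a ``direct consequence of Lemma \ref{lem3.1}'' without writing out the details, and your argument (additivity of $\alpha_k$, $n$, and $\omega$ over components, summing the componentwise bounds, and noting that one strict summand forces strict global inequality) is precisely that omitted verification.
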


\begin{lem}\label{lem3.3}
Let $G$ be a graph with $x\in V_G$  lying on some cycle of $G$. If $G$ is $k$-good, then
\begin{wst}
\item[{\rm (i)}] $\alpha_k(G)=\alpha_k(G-x);$
\item[{\rm (ii)}] $\omega(G)=\omega(G-x)+1;$
\item[{\rm (iii)}] $G-x$ is $k$-good.
\end{wst}
\end{lem}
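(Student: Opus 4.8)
The plan is to pin down $\alpha_k(G-x)$ by trapping it between two quantities that both coincide with $\alpha_k(G)$, so that every inequality in the resulting chain is forced to be an equality; the three assertions then fall out simultaneously. Write $n=|V_G|$ and recall that $k$-goodness of $G$ means $\alpha_k(G)=\frac{k-1}{k}[n-\omega(G)]$.

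For the upper direction I would simply invoke Lemma \ref{lem2.1}(i), which gives $\alpha_k(G-x)\le\alpha_k(G)$. For the lower direction I would apply the universal bound of Lemma \ref{lem3.1} to the $(n-1)$-vertex graph $G-x$, obtaining
$$\alpha_k(G-x)\ge\frac{k-1}{k}\big[(n-1)-\omega(G-x)\big].$$
Because $x$ lies on a cycle, Lemma \ref{lem2.3}(ii) yields $\omega(G-x)\le\omega(G)-1$, hence $(n-1)-\omega(G-x)\ge n-\omega(G)$; since $t\mapsto\frac{k-1}{k}t$ is increasing, this upgrades the previous display to $\alpha_k(G-x)\ge\frac{k-1}{k}[n-\omega(G)]=\alpha_k(G)$, where the final equality uses the hypothesis that $G$ is $k$-good.

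Combining the two directions forces $\alpha_k(G-x)=\alpha_k(G)$, which is part (i). Moreover, equality throughout the lower chain can hold only if no slack was introduced at the step governed by Lemma \ref{lem2.3}(ii), so $(n-1)-\omega(G-x)=n-\omega(G)$, i.e. $\omega(G-x)=\omega(G)-1$, giving part (ii). Finally, substituting (ii) back into the lower bound shows $\alpha_k(G-x)=\frac{k-1}{k}[(n-1)-\omega(G-x)]$, so $G-x$ is $k$-good, which is part (iii).

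I do not expect a genuine obstacle here; the only point requiring care is the bookkeeping that certifies every inequality in the sandwich is tight. The substantive observation is that Lemma \ref{lem2.3}(ii) controls the drop in $\omega$ when a cycle vertex is deleted, and that pairing this with the universal lower bound of Lemma \ref{lem3.1} already produces a lower estimate for $\alpha_k(G-x)$ equal to $\alpha_k(G)$; once the sandwich closes, all three equalities are immediate.
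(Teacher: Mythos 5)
Your proposal is correct and follows essentially the same route as the paper: the paper's proof of Lemma \ref{lem3.1} already sets up the chain $\alpha_k(G)\ge\alpha_k(G-x)\ge\frac{k-1}{k}[n-1-\omega(G-x)]\ge\frac{k-1}{k}[n-\omega(G)]$ via Lemmas \ref{lem2.1}(i) and \ref{lem2.3}(ii), and the paper's proof of Lemma \ref{lem3.3} simply observes that $k$-goodness forces equality throughout, exactly as you argue. Your write-up just makes the bookkeeping explicit.
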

\begin{proof}
The fact that $G$ is  $k$-good together with the proof of Lemma \ref{lem3.1} forces all equalities in \eqref{eq3.1}-\eqref{eq3.2}. Therefore, (i)-(iii) are all derived.

\end{proof}

An induced cycle $H$  of a graph $G$ is called a \textit{pendant cycle} if $H$ contains a unique vertex of degree 3 and each of its other  vertices is of degree 2 in $G$. For example, the graph $G$ as depicted in Figure \ref{fig1}  has exactly one pendant cycle $C_2.$

\begin{lem}\label{lem3.4}
Let $G$ be a graph with $C_q$ being a pendant cycle of $G$. Let $H:=G-C_q$. If $G$ is $k$-good, then
\begin{wst}
\item[{\rm (i)}] $q\equiv 1\pmod{k};$
\item[{\rm (ii)}] $\alpha_k(G)=\alpha_k(H)+\frac{k-1}{k}(q-1);$
\item[{\rm (iii)}] $H$ is $k$-good.
\end{wst}
\end{lem}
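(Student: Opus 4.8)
The plan is to reduce all three statements to a single vertex-deletion and then assemble the lemmas already established. Write the pendant cycle as $C_q=w_1w_2\cdots w_qw_1$, where $w_1$ is its unique vertex of degree $3$ and $w_2,\dots,w_q$ each have degree $2$ in $G$. By the definition of a pendant cycle, $w_1$ is the only vertex of $C_q$ adjacent to anything outside $C_q$, so deleting $w_1$ both turns the cycle into a path and detaches it from the rest. Concretely, I would first record the structural identity $G-w_1=H\cup P_{q-1}$ (a disjoint union): the vertices $w_2,\dots,w_q$ induce $P_{q-1}$, being a cycle with one vertex removed; they have no neighbour in $V_H$ once $w_1$ is gone; and $G[V_H]=H$.

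Next, since $w_1$ lies on the cycle $C_q$ and $G$ is $k$-good, I would apply Lemma~\ref{lem3.3} with $x=w_1$ to obtain $\alpha_k(G)=\alpha_k(G-w_1)$ together with the fact that $G-w_1$ is $k$-good. As $G-w_1=H\cup P_{q-1}$ is disconnected, Lemma~\ref{lem3.2} then forces both components to be $k$-good; in particular $H$ is $k$-good, which is exactly conclusion~(iii). For (i), I would use that every $k$-good graph $G'$ satisfies $k\mid(|V_{G'}|-\omega(G'))$: applied to the $k$-good tree $P_{q-1}$ (for which $\omega=0$) this gives $k\mid(q-1)$, i.e. $q\equiv1\pmod{k}$.

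Finally, for (ii) I would combine additivity of $\alpha_k$ over connected components, giving $\alpha_k(G)=\alpha_k(G-w_1)=\alpha_k(H)+\alpha_k(P_{q-1})$, with the path value from Lemma~\ref{lem2.2}(i); since $k\mid(q-1)$ we have $\alpha_k(P_{q-1})=\big\lceil\tfrac{k-1}{k}(q-1)\big\rceil=\tfrac{k-1}{k}(q-1)$, whence $\alpha_k(G)=\alpha_k(H)+\tfrac{k-1}{k}(q-1)$. I do not expect a genuine obstacle: the argument is essentially an assembly of Lemmas~\ref{lem2.2}, \ref{lem3.2}, and~\ref{lem3.3}. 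The one step needing care is justifying the decomposition $G-w_1=H\cup P_{q-1}$, and this is precisely where the pendant-cycle hypothesis (a single degree-$3$ vertex, all others of degree $2$) is used, since it guarantees that $w_1$ is the sole link between $C_q$ and $H$.
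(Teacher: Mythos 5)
Your proposal is correct and follows essentially the same route as the paper: delete the unique degree-$3$ vertex $w_1$ of the pendant cycle, observe $G-w_1=H\cup P_{q-1}$, and then combine Lemma~\ref{lem3.3} (for $\alpha_k(G)=\alpha_k(G-w_1)$ and $k$-goodness of $G-w_1$), Lemma~\ref{lem3.2} (to pass to the components), and Lemma~\ref{lem2.2}(i). The only cosmetic difference is that you derive $q\equiv 1\pmod{k}$ from the divisibility condition $k\mid(|V_{G'}|-\omega(G'))$ for $k$-good graphs, whereas the paper reads it off from $\alpha_k(P_{q-1})=\frac{k-1}{k}(q-1)$ being an integer; both are valid and equivalent.
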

\begin{proof}
Let $x$ be the unique vertex of degree $3$ on $C_q.$ Then $G-x=P_{q-1}\cup H$.
By Lemmas \ref{lem3.2} and \ref{lem3.3}(iii), we obtain both $P_{q-1}$ and $H$ are good. This means $\alpha_k(P_{q-1})=\frac{k-1}{k}(q-1)$. Hence, by Lemma \ref{lem2.2}(i), we have $q\equiv 1\pmod{k}$.
 Applying
Lemma \ref{lem3.3}(i), one has
$$
\alpha_k(G)=\alpha_k(G-x)=\alpha_k(H)+\alpha_k(P_{q-1})
=\alpha_k(H)+\frac{k-1}{k}(q-1).
$$

\end{proof}

\begin{lem}\label{lem3.6}
If $G$ is $k$-good, then
\begin{wst}
\item[{\rm (i)}] the cycles (if any) of $G$ are pairwise vertex-disjoint;
\item[{\rm (ii)}] each cycle (if any) of $G$ has length 1 modulo $k$;
\item[{\rm (iii)}] $\alpha_k(G)=\alpha_k(\Gamma_G)+\sum_{C\in \mathscr{C}_G}\frac{k-1}{k}(|V_C|-1).$
\end{wst}
\end{lem}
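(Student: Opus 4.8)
The plan is to argue by induction on $\omega(G)$, using Lemma \ref{lem3.3} (deleting a vertex on a cycle of a $k$-good graph keeps it $k$-good, preserves $\alpha_k$, and drops $\omega$ by exactly $1$) as the engine, together with Lemmas \ref{lem3.2} and \ref{lem3.4} and the cycle-space bookkeeping of Lemma \ref{lem2.3}. The base case $\omega(G)=0$ is a tree, where $\mathscr{C}_G=\emptyset$ and $\Gamma_G=G$, so (i) and (ii) are vacuous and (iii) is trivial.

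For part (i), I would start from the exact identity $\omega(G)-\omega(G-x)=d_G(x)-1-\big(c(G-x)-c(G)\big)$, which follows directly from $\omega(H)=|E_H|-|V_H|+c(H)$. Lemma \ref{lem3.3}(ii) pins the left-hand side to $1$ for every vertex $x$ lying on a cycle. If the cycles were not pairwise vertex-disjoint, then two distinct (hence cycle-space independent) cycles would share a vertex, so either they lie in a common block that is $2$-connected but not a single cycle, or they lie in two distinct blocks meeting at a cut vertex. In the first situation such a block has a vertex $x$ of degree $\ge 3$; since the block minus $x$ stays connected, all $\ge 3$ block-neighbours of $x$ fall into a single component of $G-x$, whence $c(G-x)-c(G)\le d_G(x)-3$ and the identity forces $\omega(G)-\omega(G-x)\ge 2$. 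The cut-vertex situation is handled identically (each of the two cycle-blocks contributes $\ge 2$ neighbours collapsing into one component). Either way this contradicts Lemma \ref{lem3.3}(ii), so (i) holds. I expect this local-to-global counting to be the most delicate point to write carefully, since every configuration of two overlapping cycles must be covered by the same bound.

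For parts (ii) and (iii), part (i) guarantees that every cycle is an induced block. Pick a cycle $C$ with $q:=|V_C|$ and a vertex $x\in V_C$; by Lemma \ref{lem3.3} the graph $G-x$ is $k$-good with $\alpha_k(G)=\alpha_k(G-x)$, $\omega(G-x)=\omega(G)-1$, and $\mathscr{C}_{G-x}=\mathscr{C}_G\setminus\{C\}$, so the induction hypothesis applies to $G-x$. The clean case is when $C$ meets the rest of $G$ at a single vertex: choosing $x$ to be that vertex makes $G-x=P_{q-1}\sqcup H$ with $H=G-V_C$, Lemma \ref{lem3.2} gives that both $P_{q-1}$ and $H$ are $k$-good, Lemma \ref{lem2.2}(i) then yields $q\equiv 1\pmod{k}$, and since $\Gamma_G=\Gamma_H$ and $\mathscr{C}_G=\mathscr{C}_H\cup\{C\}$ the formula in (iii) follows by adding $\alpha_k(P_{q-1})=\tfrac{k-1}{k}(q-1)$; this is precisely the mechanism of Lemma \ref{lem3.4}.

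The hard part is that a $k$-good graph with a cycle need not contain such a singly-attached cycle: a cycle may be \emph{internal}, meeting the rest of $G$ at two or more vertices, in which case $G-x$ does not split off as a disjoint union, and $\Gamma_{G-x}$ is $\Gamma_G$ with the path $P_{q-1}$ (the remnant of $C$) glued back at the attachment vertices of $C$. To finish I would establish a forest lemma: in the $k$-good forest $\Gamma_{G-x}$, whose components lie in $R^{k}$, the induced path $P_{q-1}$ arising from $C$ must align with the block boundaries of the $R^{k}$-decomposition, so that $k\mid(q-1)$ (giving (ii)) and excising it leaves each component again in $R^{k}$ (so that $\Gamma_G$ is $k$-good and $\alpha_k(\Gamma_{G-x})=\alpha_k(\Gamma_G)+\tfrac{k-1}{k}(q-1)$, giving (iii)). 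Proving this rigidity of the $R^{k}$-structure under removal of the cycle-remnant path is, I expect, the main obstacle.
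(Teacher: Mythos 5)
Your part (i) is correct and is essentially the paper's argument: in both of your configurations (two cycles in a common $2$-connected block, or meeting at a cut vertex) you obtain $c(G-x)-c(G)\le d_G(x)-3$, hence $\omega(G-x)\le\omega(G)-2$, contradicting Lemma \ref{lem3.3}(ii). Your treatment of (ii)--(iii) when $G$ has a pendant cycle is also exactly the paper's Case 1 (Lemma \ref{lem3.4} plus the induction hypothesis).

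The genuine gap is the case you yourself flag at the end: a $k$-good graph with cycles need not contain any pendant cycle, and for an internal cycle $C$ your plan rests on an unproven ``forest rigidity lemma'' --- that the remnant path $P_{q-1}$ of $C$ inside the $k$-good forest $\Gamma_{G-x}$ must align with the $R^{k}_{i}$-block structure of its components, forcing $k\mid(q-1)$ and leaving $\Gamma_G$ $k$-good after excision. Nothing in the lemmas you invoke supplies this; the claim depends on the choice of $x\in V_C$ and on where the attachment vertices of $C$ sit inside the components of $\Gamma_{G-x}$, so it is not a routine consequence of Theorem \ref{thm1.1}, and you explicitly leave it as ``the main obstacle.'' The paper avoids the issue altogether by inducting on $n$ rather than on $\omega(G)$: when no pendant cycle exists, it works in the contracted tree $T_G$, picks a deepest vertex $v$ with $|V_{T_G^v}|\ge k$, shows $V_{T_G^v}$ contains no contracted cycle-vertex (otherwise some $G-y$ with $y$ on a cycle would have a component of order less than $k$, contradicting Lemma \ref{lem3.3}(iii)), proves via a two-sided estimate that $|V_{T_G^v}|=k$ and that $H_2=G-V_{T_G^v}$ is again $k$-good, and then applies the induction hypothesis to $H_2$; internal cycles are never dissected --- they are only removed once they become pendant or the graph reduces to a single cycle. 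To complete your route you would either have to prove the rigidity lemma from scratch or switch to this leaf-peeling reduction.
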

\begin{proof}
If $G$ is not cycle-disjoint, then choose a vertex $x$ lying on at least two cycle such that $d_G(x)\geq3.$
Thus two distinct pairs of neighbors of $x$ remain connected in $G-x, $ implying  $c(G-x)\leq d_G(x)-2.$
This together with the fact that $|V_{G-x}|=|V_G|-1$ and $|E_{G-x}|=|E_G|-d_G(x)$ yields $\omega(G-x)\leq \omega(G)-2,$ which contradicts to Lemma \ref{lem3.3}(ii).
This completes the proof of (i).

We proceed by induction on the order $n$ of $G$ to prove (ii) and (iii). Since $G$ is $k$-good, it must be $k \mid (n-\omega(G))$. If $n=k,$ then $\omega(G)=0,$ i.e., $G$ is a $k$-vertex tree.
Thus, (ii)-(iii) establish obviously.
Suppose that (ii) and (iii) hold for any $k$-good graph of order smaller than $n$, and suppose $G$ is a $k$-good graph with order $n\geq k+1.$

Since Lemma \ref{lem3.6}(i) shows cycles are disjoint for $k$-good graphs, $T_G$ is well-defined. If $T_G$ is an empty graph, then $G\cong C_n.$ Thus (ii) and (iii)  follow from  the following fact, which follows directly from Lemma \ref{lem2.2}(ii).
\begin{fact}\label{fact1}
$C_n$ is $k$-good $\Leftrightarrow$ $n\equiv 1\pmod{k}$ $\Leftrightarrow$
$\alpha_k(C_n)=\frac{k-1}{k}(n-1)$.
\end{fact}

If $T_G$ contains at least one edge, then $\mathcal{P}(T_G)\neq\emptyset$, where $\mathcal{P}(T_G)$ denotes the set of all pendant vertices of $T_G$. In order to complete the proof of (ii) and (iii) in this case, it suffices to consider the following two possible cases.

{\bf{Case 1.}}\ $\mathcal{P}(T_G)\cap  W_{\mathscr{C}_G}\neq\emptyset$. In this case, $G$ has a pendant cycle, say $C_q$. Let $H_1=G-C_q$. It follows from Lemma \ref{lem3.4}(iii) that $H_1$ is $k$-good. Applying the induction hypothesis to $H_1$ yields
\begin{wst}
\item[{\rm (a)}] each cycle (if any) of $H_1$ has length 1 modulo $k$;
\vspace{1mm}
\item[{\rm (b)}] $\alpha_k(H_1)=\alpha_k(\Gamma_{H_1})+\sum_{C\in \mathscr{C}_{H_1}}\frac{k-1}{k}(|V_C|-1).$
\end{wst}
Assertion (a) and Lemma \ref{lem3.4}(i) imply that each cycle (if any) of $G$ has length 1 modulo $k$ since $\mathscr{C}_G=\mathscr{C}_{H_1}\cup\{C_q\}.$ Thus, (ii) holds in this case.

Combining with Lemma \ref{lem3.4}(ii) and Assertion (b) we have
\begin{eqnarray}\label{eq3.5}
\alpha_k(G)=\alpha_k(H_1)+\frac{k-1}{k}(q-1)=\alpha_k(\Gamma_{H_1})
+\sum_{C\in \mathscr{C}_{H_1}}\frac{k-1}{k}(|V_C|-1)+\frac{k-1}{k}(q-1).
\end{eqnarray}
Note that $\Gamma_G\cong \Gamma_{H_1}$ and
$$
\sum_{C\in \mathscr{C}_{H_1}}\frac{k-1}{k}(|V_C|-1)+\frac{k-1}{k}(q-1)=\sum_{C\in \mathscr{C}_G}\frac{k-1}{k}(|V_C|-1).
$$
Together with (\ref{eq3.5}) we have
\begin{eqnarray*}
\alpha_k(G)=\alpha_k(\Gamma_G)+\sum_{C\in \mathscr{C}_G}\frac{k-1}{k}(|V_C|-1).
\end{eqnarray*}
That is to say, (iii) holds in this case.

\vspace{2mm}

{\bf{Case 2.}}\ $\mathcal{P}(T_G)\cap  W_{\mathscr{C}_G}=\emptyset$.
In this case, $\mathcal{P}(T_G)\subseteq U_G$. Since $G$ is  $k$-good, by
Lemma \ref{lem3.3}(iii), $G-x$ is also a $k$-good graph provided that $x$ lies on some
cycle of $G$. Moreover, a necessary condition for a graph to be $k$-good is that its order is at least $k$. It follows that $|V_{T_G}|\geq|V_{\Gamma_G}|\geq k$. Root \( T_G \) arbitrarily. Choose a vertex \( v\in V_{T_G} \) such that \( |V_{T_G^v}| \geq k \), and subject to this condition,
the level of $v$ is as large as possible. By choice, every descendant \( u \) of \( v \) satisfies \( |V_{T_G}^u| \leq k-1 \). Therefore, $\alpha_k(T_G^v)=|V_{T_G^v}|-1$ (since deleting $v$
separates all child subtrees into  connected components of order at most $k-1$).

If $V_{T_G^v}\cap  W_{\mathscr{C}_G}\neq\emptyset,$ then choose a vertex $w\in V_{T_G^v}\cap  W_{\mathscr{C}_G} $  with the largest possible level. Under this choice, there exists a vertex $y$ on some cycle of $G$
such that $T_G^w-w$ is an induced subgraph of $G-y$ with at most $k-1$ vertices.
Note that it is impossible for any graph with fewer than $k$ vertices to be $k$-good. Then $G-y$
is not a $k$-good graph, which contradicts Lemma \ref{lem3.3}(iii). Therefore,
$V_{T_G^v}\cap  W_{\mathscr{C}_G}=\emptyset,$ i.e., $V_{T_G^v}\subseteq U_G.$

Let $H_2=G-V_{T_G^v}.$ Then we arrive at
\begin{eqnarray}\label{eq3.4a}
|V_{H_2}|=n-|V_{T_G^v}|,\ \  \omega(H_2)=\omega(G).
\end{eqnarray}
Since the graph
$H_2 \bigcup V_{T_G^v} $ can be obtained from
$G$ by removing some edges. By Lemma~\ref{lem2.1}(ii), we obtain
\begin{eqnarray}\label{3.4a}
\alpha_k(G)\leq \alpha_k(H_2)+\alpha_k(T_G^v)=\alpha_k(H_2)+|V_{T_G^v}|-1.
\end{eqnarray}
Moreover, if $S'$ is a maximum generalized $k$-independent set of $H_2,$ then $S'\cup(V_{T_G^v}\setminus \{v\})$ is a generalized $k$-independent set of $G,$ which implies
\begin{eqnarray}\label{3.4b}
\alpha_k(G)\geq \alpha_k(H_2)+|V_{T_G^v}|-1.
\end{eqnarray}
Equalities \eqref{3.4a}-\eqref{3.4b} yield
\begin{eqnarray}\label{eq3.5a}
\alpha_k(H_2)=\alpha_k(G)-|V_{T_G^v}|+1.
 \end{eqnarray}
Recall that $|V_{T_G^v}| \geq k.$ Then Equalities \eqref{eq3.4a} and \eqref{eq3.5a} together with the
$k$-good condition of $G$ lead to
$$
\alpha_k(H_2)=\frac{k-1}{k}(|V_{H_2}|-\omega(H_2))-\frac{1}{k}(|V_{T_G^v}|-k)
\leq\frac{k-1}{k}(|V_{H_2}|-\omega(H_2)).
$$
Lemma 4.1 forces the above inequality to become an equality. Therefore,
 $|V_{T_G^v}|=k$ and $H_2$ is $k$-good. Applying the induction hypothesis to $H_2$ implies that
\begin{wst}
\item[{\rm (c)}] each cycle (if any) of $H_2$ has length 1 modulo $k$;
\vspace{1mm}
\item[{\rm (d)}] $\alpha_k(H_2)=\alpha_k(\Gamma_{H_2})+\sum_{C\in \mathscr{C}_{H_2}}\frac{k-1}{k}(|V_C|-1).$
\end{wst}

Since $\mathscr{C}_G= \mathscr{C}_{H_2}$,  combining with Assertion (c) we have each cycle (if any) of $H_2$ has length 1 modulo $k$.
Note that $\Gamma_{H_2}=\Gamma_G-V_{T_G^v}$. Following a similar approach to the proof
of Equality \eqref{eq3.5a}, it follows that $\alpha_k(\Gamma_G)=\alpha_k(\Gamma_{H_2})+k-1$.
Hence,  Equality \eqref{eq3.5a} and Assertion (d) give
$$
\alpha_k(G)=\alpha_k(H_2)+k-1=\alpha_k(\Gamma_{H_2})+\sum_{C\in \mathscr{C}_{H_2}}\frac{k-1}{k}(|V_C|-1)+k-1=\alpha_k(\Gamma_G)+\sum_{C\in \mathscr{C}_G}\frac{k-1}{k}(|V_C|-1).
$$

\end{proof}

With the foregoing lemmas, we now turn to the proof of Theorem \ref{thm1.3}.

\vspace{2mm}

\noindent{\bf Proof of Theorem \ref{thm1.3}.}\ The lower bound in Inequality \eqref{eq1.1} is established by Lemma~\ref{lem3.1}. The purpose of this section is to characterize all graphs attaining this bound via an analysis of the equality conditions in \eqref{eq1.1}.
\vspace{1mm}

For ``sufficiency", Assertion (i) and Lemma
\ref{lem2.3}(iii) imply that $G$ has exactly $\omega(G)$
cycles, that is,
\begin{eqnarray}\label{eq3.10}
|\mathscr{C}_G|=\omega(G).
\end{eqnarray}
Assertion (ii) together with Lemma \ref{lem2.2}(ii) yields
\begin{eqnarray}\label{eq3.11}
\alpha_k(C)=\frac{k-1}{k}(|V_C|-1)
\end{eqnarray}
for any $C\in \mathscr{C}_G.$
Combining Assertion (iii) and
Theorem \ref{thm1.2}, we have
\begin{eqnarray}\label{eq3.12}
\alpha_k(\Gamma_G)=\frac{k-1}{k}|V_{\Gamma_G}|.
\end{eqnarray}
Note that the graph
$\left(\bigcup_{C\in \mathscr{C}_G}C\right) \bigcup \Gamma_G$ can be obtained from
$G$ by removing some edges. By Lemma~\ref{lem2.1}(ii) and \eqref{eq3.10}-\eqref{eq3.12}, we arrive at
\begin{eqnarray*}
\alpha_k(G)&\leq& \alpha_k(\Gamma_G)+\sum_{C\in \mathscr{C}_G}\alpha_k(C)\\
&=&\frac{k-1}{k}|V_{\Gamma_G}|+\sum_{C\in\mathscr{C}_G}\frac{k-1}{k}(|V_C|-1)\\
&=&\frac{k-1}{k}\left(|V_{\Gamma_G}|+\sum_{C\in\mathscr{C}_G}|V_C|-\omega(G)\right)\\
&=&\frac{k-1}{k}[n-\omega(G)].
\end{eqnarray*}
Therefore, $\alpha_k(G)=\frac{k-1}{k}[n-\omega(G)]$ by Lemma \ref{lem3.1}.

\vspace{2mm}
For ``necessity", let $G$ be a $k$-good graph. It follows from Lemma \ref{lem3.6}(i)-(ii) that  the cycles (if any) of $G$ are pairwise vertex-disjoint, and  each  cycle (if any) of $G$ has length 1 modulo $k$.  Thus, assertions (i) and (ii) are established,  which immediately yields \eqref{eq3.10}. Recall that $G$ is  $k$-good. Then
\begin{eqnarray*}
\alpha_k(G)&=&\frac{k-1}{k}(n-\omega(G))\\
&=&\frac{k-1}{k}\left(|V_{\Gamma_G}|+\sum_{C\in\mathscr{C}_G}|V_C|-|\mathscr{C}_G|\right)\\
&=&\frac{k-1}{k}|V_{\Gamma_G}|+\sum_{C\in\mathscr{C}_G}\frac{k-1}{k}(|V_C|-1).
\end{eqnarray*}
In view of Lemma \ref{lem3.6}(iii), we get
$\alpha_k(\Gamma_G)=\frac{k-1}{k}|V_{\Gamma_G}|,$ which combining with
Theorem \ref{thm1.2} implies assertion (iii) holds. \qed
\vspace{2mm}

\noindent\textbf{Remark}
Let $G$ be an $n$-vertex  graph with the dimension of cycle space $\omega(G)$ and $k\geq2$ be a integer.
Beyond the bound $\alpha_k(G)\ge \frac{k-1}{k}\,[n-\omega(G)]$ in \eqref{eq1.1}, the following quantitative improvements hold.
\begin{wst}
\item[{\rm (i)}] \textbf{Integer rounding (always).}
Let $r_0\in\{0,1,\ldots,k-1\}$ satisfy $n-\omega(G)\equiv r_0\pmod{k}$. Then
\[
\alpha_k(G)\ \ge\ \Big\lceil \tfrac{k-1}{k}\,(n-\omega(G))\Big\rceil
\ =\ \tfrac{k-1}{k}\,(n-\omega(G))\ +\ \tfrac{r_0}{k}.\notag
\]

\item[{\rm (ii)}]\textbf{Overlapping cycles force a jump of size $\boldsymbol{\frac{k-1}{k}}$.}
If $G$ has two cycles sharing a vertex, then there exists $x$ with $\omega(G-x)\le \omega(G)-2$ (first line of Lemma~4.5(i)), and
\[
\alpha_k(G)\ \ge\ \alpha_k(G-x)\ \ge\ \tfrac{k-1}{k}\,\big(n-1-\omega(G-x)\big)\ \ge\ \tfrac{k-1}{k}\,\big(n-\omega(G)+1\big),\notag
\]
using Lemma~2.1(i) and Lemma~4.1. Thus compared to \eqref{eq1.1} there is an additive slack of at least $\tfrac{k-1}{k}$ (and, combined with Assertion (i), of at least $\big\lceil \tfrac{k-1}{k}\,(n-\omega(G)+1)\big\rceil-\tfrac{k-1}{k}\,(n-\omega(G))$).%

\item[{\rm (iii)}]\textbf{Pendant cycles not $\boldsymbol{1\bmod k}$ contribute explicit slack.}
Suppose $C_q$ is a \emph{pendant cycle} (Definition before Lemma~4.4), write $q\equiv s\pmod{k}$ with $s\in\{0,1,\dots,k-1\}$. Setting $H:=G-C_q$, Lemma~2.2(i) gives
\[
\alpha_k(G)\geq\alpha_k(G-x)=\alpha_k(H)+\alpha_k(P_{q-1})=\alpha_k(H)+\Big\lceil\tfrac{k-1}{k}\,(q-1)\Big\rceil,\notag
\]
where $x$ is the unique degree-$3$ vertex on $C_q$. Compared to the cycle contribution $\frac{k-1}{k}(q-1)$ that appears at equality in Theorem~1.3, the \emph{extra} amount is
\[
\sigma(q)\ :=\ \Big\lceil\tfrac{k-1}{k}\,(q-1)\Big\rceil-\tfrac{k-1}{k}\,(q-1)\ =\
\begin{cases}
\frac{k-1}{k}, & s=0,\\[2pt]
\frac{s-1}{k}, & s=1,2,\dots,k-1,
\end{cases}\notag
\]
(where the case $s=1$ gives $0$, matching Lemma~4.4(i) with $q\equiv1\pmod{k}$). For several pendant cycles $C_{q_i}$ one sums the $\sigma(q_i)$.%

\item[{\rm (iv)}]\textbf{Non-extremal tree components of $\boldsymbol{\Gamma_G}$ yield deterministic slack.}
Let $\Gamma_G$ be obtained by deleting all cycle vertices (Section~1.2). If $\Gamma_G$ has connected components $T_1,\dots,T_t$ with $|V_{T_j}|=k\,a_j+r_j$ ($0\le r_j\le k-1$), then by \eqref{eq1.1}, we have
\[
\alpha_k(\Gamma_G)\ \ge\ \sum_{j=1}^t \Big\lceil \tfrac{k-1}{k}\,|V_{T_j}|\Big\rceil
\ =\ \tfrac{k-1}{k}\,|V_{\Gamma_G}|\ +\ \sum_{j=1}^t \tfrac{r_j}{k}.\notag
\]
In particular, if some $r_j=0$ but $T_j\notin R^{k}_{|V_{T_j}|/k}$, then the ``if and only if'' part of Theorem~1.1 forces \(\alpha_k(T_j)\ge \tfrac{k-1}{k}|V_{T_j}|+1\), i.e., that connected component contributes at least an \emph{extra} \(+1\). Since any generalized $k$-independent set in $\Gamma_G$ is also such a set in $G$ (the induced edges on $V_{\Gamma_G}$ are unchanged), this slack carries over to $\alpha_k(G)$.%
\end{wst}

Assertions (i)-(iv) are independent sources of slack and can be combined when their hypotheses hold simultaneously. In particular, Assertion (i) alone shows that the bound in \eqref{eq1.1} is attained only when $k\mid (n-\omega(G))$ (this gives in the proof of Lemma~4.5 and Theorem~1.3), while Assertions (ii)-(iv) quantify strict improvements when any equality condition fails (cycle overlap or residue, or a non-extremal block in $\Gamma_G$).

\vspace{2mm}

\noindent\textbf{Appendix A}
\vspace{2mm}

In this appendix, we present an algorithm that outputs a large generalized $k$-independent set $S$ for any $n$-vertex graph $G$. The set $S$ will satisfy
\[
|S| \geq \left\lceil \frac{k-1}{k} \big(n - \omega(G)\big) \right\rceil.\notag
\]
 The algorithm runs in linear time $O(n + m)$ and works in two main phases:
\begin{enumerate}
    \item \textbf{Cycle-Breaking:} Remove a small set of vertices to break all cycles, leaving a forest.
    \item \textbf{Tree-Pruning:} Prune the forest so that every connected component in the final set has at most $k-1$ vertices.
\end{enumerate}

\noindent\textbf{Input:} An undirected graph $G = (V_G, E_G)$ and an integer $k \geq 2$.\vspace{1mm}

\noindent\textbf{Output:} A subset $S \subseteq V_G$ such that the subgraph induced by $S$ contains no tree on $k$ vertices and $|S|\geq\left\lceil \frac{k-1}{k}(n - \omega(G)) \right\rceil.$ \vspace{2mm}

\noindent\textbf{Phase A: Cycle-Breaking (DFS-Shrink)}

\textbf{Goal:} Break all cycles by deleting at most $\omega(G)$ vertices.

\begin{algorithmic}[1]
\State \textbf{Step A.1:} Run DFS on each connected component of $G$.
\State \quad $\rightarrow$ For each vertex $v$, record:
\State \quad \quad - $\text{parent}(v)$: the parent in the DFS tree.
\State \quad \quad - $\text{depth}(v)$: the distance from the root.
\State
\State \textbf{Step A.2:} Mark vertices to break cycles.
\State \quad $\rightarrow$ For every \textit{back edge} $(u, v)$ (non-tree edge where $\text{depth}(u) > \text{depth}(v)$):
\State \quad \quad Mark the \textit{deeper} vertex ($u$).
\State
\State \textbf{Step A.3:} Remove marked vertices.
\State \quad $\rightarrow$ Let $X$ be the set of marked vertices.
\State \quad $\rightarrow$ Let $U = V_G \setminus X$. Then $F = G[U]$ is a forest (acyclic).
\State \quad $\rightarrow$ A DFS yields exactly \(\omega(G)\) non-tree edges in an undirected graph; marking the deeper endpoint of each such edge and removing all marked vertices destroys all cycles, leaving a forest on at least \(n-\omega(G)\) vertices. Hence, $|U| \geq n - \omega(G)$.
\end{algorithmic}
\vspace{2mm}

\noindent\textbf{Phase B: Tree-Pruning (Tree-Clip)}

\textbf{Goal:} Prune the forest $F$ so that every connected component in the final set $S$ has at most $k-1$ vertices.

\begin{algorithmic}[1]
\State \textbf{Step B.1:} Initialize $S \gets U$ (all vertices from Phase A).
\State
\State \textbf{Step B.2:} For each tree $T$ in the forest $F$:
\State \quad $\rightarrow$ Root $T$ arbitrarily.
\State \quad $\rightarrow$ Do a \textbf{post-order DFS} (children before parent) using function \textsc{Clp}$(v, p)$:
\end{algorithmic}

\begin{algorithmic}[1]
\Function{Clp}{$v, p$}
\State $s \gets 1$ \Comment{Start with the current vertex $v$}
\For{each neighbor $w$ of $v$ in $T$ except $p$} \Comment{Visit all children}
\State $s \gets s + \textsc{Clp}(w, v)$ \Comment{Recursively add sizes of subtrees}
\EndFor
\If{$s \geq k$}
\State Remove $v$ from $S$ \Comment{Prune if subtree too large}
\State \Return 0 \Comment{Signal that this branch was cut}
\Else
\State \Return $s$ \Comment{Return the size of the current connected component}
\EndIf
\EndFunction
\end{algorithmic}

\begin{algorithmic}[1]
\State \textbf{Step B.3:} Output $S$.
\end{algorithmic}

\vspace{2mm}

\noindent\textbf{Why it works}
\begin{itemize}
    \item After Phase A, we have a forest $F$ with $|U| \geq n - \omega(G)$ vertices.
    \item In Phase B, each time \(s\geq k\) at \(v\), removing \(v\) resets the contribution of that branch to \(0\); because we proceed in post-order, these \(k\)-sized contributions are disjoint, so the number of removals is at most \(\lfloor|U|/k\rfloor\). Thus,
        $$
        |S| \geq |U| - \lfloor |U| / k \rfloor \geq \left\lceil \frac{k-1}{k} |U| \right\rceil \geq \left\lceil \frac{k-1}{k} (n - \omega(G)) \right\rceil.
        $$
\end{itemize}

\noindent\textbf{Optional refinement for equality case}\vspace{1mm}

If the graph $G$ meets the conditions for equality in Theorem 1.3, you can optimize the algorithm to achieve the exact bound:
\[
|S| = \frac{k-1}{k}(n - \omega(G))\notag
\]
when (i)-(iii) of Theorem \ref{thm1.3} hold. How?
\begin{itemize}
    \item In Phase A, contract each cycle to a single vertex.
    \item In Phase B, delete exactly one vertex per $k$-block in $\Gamma_G$ and one vertex every $k$ steps on cycles of length $\equiv 1 \pmod{k}$.
\end{itemize}

\end{document}